\def\@endtheorem{\endtrivlist}
\title{Constructing Initial Algebras Using Inflationary Iteration}
\author{Andrew M. Pitts \qquad\qquad S. C. Steenkamp\thanks{Supported
    by UK EPSRC PhD studentship 2119809.} 
\institute{Department of Computer Science and Technology\\
University of Cambridge, UK}
\email{andrew.pitts@cl.cam.ac.uk \qquad s.c.steenkamp@cl.cam.ac.uk}
}
\begin{document}
\maketitle

\begin{abstract}
  An old theorem of Ad\'amek constructs initial algebras for
  sufficiently cocontinuous endofunctors via transfinite iteration
  over ordinals in classical set theory. We prove a new version that
  works in constructive logic, using ``inflationary'' iteration over a
  notion of \emph{size} that abstracts from limit ordinals just their
  transitive, directed and well-founded properties. Borrowing from
  Taylor's constructive treatment of ordinals, we show that sizes
  exist with upper bounds for any given signature of indexes. From
  this it follows that there is a rich class of endofunctors to which
  the new theorem applies, provided one admits a weak form of choice
  (WISC) due to Streicher, Moerdijk, van~den~Berg and Palmgren, and
  which is known to hold in the internal constructive
  logic of many kinds of topos.
\end{abstract}

\section{Introduction}

Initial algebras for endofunctors are a simple category-theoretic
concept that has proved very useful in logic and computer
science. Recall that an \emph{initial algebra} $(\mu F,\iota)$ for an
endofunctor $F:\C\fun\C$ on a category $\C$ is a morphism
$\iota:F(\mu F)\fun \mu F$ in $\C$ with the property that for any
morphism $a:F(A)\fun A$, there is a unique $\lift{a}:\mu F\fun A$ that
is an $F$-algebra morphism, that is, satisfies
$\lift{a}\comp\iota = a\comp F(\lift{a})$. In functional programming,
$\lift{a}$ is sometimes called the \emph{catamorphism} associated with
the algebra $(A,a)$~\cite{MeijerE:funpbl}. By varying the choice of
$\C$ and $F$, such initial algebras give semantics for various kinds
of inductive (or dually, coinductive) structures and, via their
catamorphisms, associated (co)recursion schemes. We refer the reader
to the draft book by Ad{\'a}mek, Milius, and Moss~\cite{ammbook} for an
account of this within classical logic.

Here we make a contribution to the existence of initial algebras
within \emph{constructive} logics. Our reason for seeking a
constructive treatment is not philosophical, nor motivated by the
computational insights that a constructive approach can bring,
important though both those thing are. Rather, we are interested in
the semantics of dependent type theories with inductive constructions,
such as types that are inductive~\cite{Martin-LoefP:inttt},
inductive-recursive~\cite{DybjerP:genfsi},
inductive-inductive~\cite{ForsbergFN:indid}, quotient
(inductive-)inductive~\cite{AltenkirchT:quoiit,KaposiA:lariqi} and
more generally higher-inductive~\cite{HoTT}. Toposes are often used
when constructing models of such type theories and sometimes the
easiest way of doing so is to use their ``internal
logic''~\cite[Part~D]{JohnstonePT:skeett} to express the
constructions; see~\cite{PittsAM:aximct,PittsAM:intumh}, for
example. Although there are different candidates for what is the
internal logic of toposes, in general they are not classical. So we
are led to ask for what categories $\C$ and functors $F:\C\fun\C$ that
are describable in such an internal logic it is the case that an
initial $F$-algebra can be constructed.

We pursue this question by developing a constructive version of
Ad\'amek's classical theorem about existence of initial algebras via
transfinite iteration over ordinals~\cite{AdamekJ:freaar} (we discuss
a different constructive approach~\cite{AdamekJ:iniatw} in
\cref{sec:relw}). Recall, or see Ad\'amek
et~al.~\cite[section~6.1]{ammbook} for example, that if $F:\C\fun\C$
is an endofunctor on a category $\C$ with all small colimits (colimits
of small chains are enough), then we get a large chain in $\C$,
$(F^\alpha0)_{\alpha\in\Ord}$ indexed by the totally ordered class of
ordinals $\Ord$, defined by recursion over the ordinals:
\begin{equation}
  \label{eq:ordinal-iteration}
  F^\alpha0 =
  \begin{cases}
    0 \quad\text{(initial object in $\C$)}
    &\text{if $\alpha = 0$, the ordinal zero}\\
    F(F^\beta 0)
    &\text{if $\alpha=\beta^{+}$ is a successor ordinal}\\
    \colim_{\beta<\lambda}F^\beta 0
    &\text{if $\alpha=\lambda$ is a limit ordinal}
  \end{cases}
\end{equation}
The links in the chain are $\C$-morphisms $i_\alpha:F^\alpha 0\fun
F^{\alpha^{+}} 0$ also defined by ordinal recursion:
\begin{equation}
  \label{eq:ordinal-link}
  i_\alpha =
  \begin{cases}
    \text{unique morphism given by initiality of $0$}
    &\text{if $\alpha = 0$, the ordinal zero}\\
    F(i_\beta)
    &\text{if $\alpha=\beta^{+}$ is a successor ordinal}\\
    \text{induced by the universal property of colimits}
    &\text{if $\alpha=\lambda$ is a limit ordinal}
  \end{cases}
\end{equation}

\begin{theorem}[\classical{} (Ad\'amek~\cite{AdamekJ:freaar})]
  If $i_\alpha$ is an isomorphism for some $\alpha\in\Ord$, then
  $(F^\alpha 0, i_\alpha^{-1})$ is an initial algebra for
  $F:\C\fun\C$. So in particular, if $F$ preserves colimits of shape
  $\lambda$ for some limit ordinal $\lambda$, then (by the definition of
  ``preserves colimits'') $i_\lambda$ is an isomorphism and
  $F^\lambda 0$ is an initial $F$-algebra.
\end{theorem}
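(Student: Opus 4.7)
The plan is to reduce both assertions to the classical transfinite-induction argument underlying Ad\'amek's theorem, extending the iterated chain one step beyond $\alpha$ so as to exploit the assumed isomorphism $i_\alpha$. First, assuming $i_\alpha$ is an isomorphism, I would construct the catamorphism $\lift{a}:F^\alpha 0\fun A$ by transfinite recursion on $\beta\leq\alpha^{+}$, mirroring the defining clauses for $F^\beta 0$ itself: take $a_0$ to be the unique morphism out of the initial object, $a_{\beta^{+}} = a\comp F(a_\beta)$, and at a limit stage $\lambda$ let $a_\lambda$ be the mediating morphism out of $F^\lambda 0=\colim_{\beta<\lambda}F^\beta 0$ induced by the cocone assembled thus far. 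A transfinite induction then shows that the $a_\beta$ form a cocone over the chain of links $i_\beta$: the successor case uses functoriality of $F$ together with the induction hypothesis, and the limit case holds by construction. Setting $\lift{a} = a_\alpha$, the cocone identity at $\beta=\alpha$ reads $a\comp F(\lift{a})\comp i_\alpha=\lift{a}$, and post-composing with $i_\alpha^{-1}$ yields the required algebra morphism equation $\lift{a}\comp i_\alpha^{-1} = a\comp F(\lift{a})$.

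For uniqueness, let $h:F^\alpha 0\fun A$ be any $F$-algebra morphism from $(F^\alpha 0,i_\alpha^{-1})$ to $(A,a)$. I would show by transfinite induction on $\beta\leq\alpha$ that $h\comp i_{\beta,\alpha}=a_\beta$, where $i_{\beta,\alpha}:F^\beta 0\fun F^\alpha 0$ denotes the composite of successive links; the successor step combines the algebra equation on $h$ with functoriality of $F$ (together with the auxiliary identity $i_{\beta^{+},\gamma^{+}}=F(i_{\beta,\gamma})$, itself provable by a separate transfinite induction from the definition of the links), and the limit step is forced by the universal property of $F^\lambda 0=\colim_{\beta<\lambda}F^\beta 0$. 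The case $\beta=\alpha$ then gives $h=a_\alpha=\lift{a}$.

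For the closing ``in particular'' clause, $F$ preserving colimits of shape $\lambda$ gives
\[
  F(F^\lambda 0) = F\bigl(\colim_{\beta<\lambda} F^\beta 0\bigr) \cong \colim_{\beta<\lambda} F(F^\beta 0) = \colim_{\beta<\lambda} F^{\beta^{+}} 0,
\]
and since the successor ordinals are cofinal below a nonzero limit $\lambda$, this last colimit is again $F^\lambda 0$; tracing the mediating morphisms shows the composite isomorphism is precisely $i_\lambda$, so the first part of the theorem applies. The main obstacle throughout is the careful bookkeeping of cocone compatibility across successor and limit stages: no single verification is deep, but one must track the interlocking transfinite recursions defining $F^\beta 0$, $a_\beta$, and $i_\beta$, and this is exactly the interplay that the paper's constructive replacement of $\Ord$ by a notion of ``size'' will need to preserve.
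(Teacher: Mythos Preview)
The paper does not actually supply a proof of this theorem: it is stated in the introduction as a cited classical result of Ad\'amek, labelled \textsc{classical}, and serves purely as motivation for the paper's constructive analogue (\cref{thm:infl-iter}). So there is no ``paper's own proof'' to compare against.

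That said, your proposal is correct and is the standard argument for Ad\'amek's theorem. The transfinite recursion defining the $a_\beta$, the cocone verification, the extraction of the algebra-morphism equation from $a_{\alpha^{+}}\comp i_\alpha = a_\alpha$ by inverting $i_\alpha$, and the uniqueness induction are all as one finds them in the literature (e.g.\ \cite[section~6.1]{ammbook}). The cofinality argument for the ``in particular'' clause is also right. It is worth noting that your proof shape---define approximants $a_\beta$ by recursion, prove they form a cocone, prove uniqueness of ``up-to-$\beta$'' morphisms by induction---is exactly the pattern the paper reuses in its constructive setting in the proof of \cref{thm:infl-iter}, with well-founded recursion over a size $\kappa$ replacing transfinite recursion over $\Ord$ and the inflationary colimit $\colim_{j<i}F(D_j)$ replacing the zero/successor/limit case split.
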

This theorem is labelled \classical{} because its proof uses classical
logic: the properties of ordinal numbers that it relies upon require
the Law of Excluded Middle ($\forall p.\; p \disj \neg p$). In
\cref{sec:infl-iter} we show that by replacing the use of
ordinals with a weaker notion of ``size'' and modifying the way $F$ is
iterated, one can obtain a constructive version of Ad\'amek's theorem
(see \cref{thm:infl-iter}).

Not only the proof, but also the application of Ad\'amek's theorem can
require classical logic: the Axiom of Choice \ac{} is often invoked to
find a suitably large limit ordinal $\lambda$ for which a particular
functor of interest preserves $\lambda$-colimits. Such uses of \ac{}
are not always necessary. In particular, existence of initial algebras
for polynominal functors
$F_{A,B}(\_)= \sum_{a\in A} (\_)^{B(a)}:\Sets\fun\Sets$ (where
$A\in\Sets$ and $B\in\Sets^A$) can be proved constructively;
see~\cite[Proposition~3.6]{MoerdijkI:weltc}. These initial algebras
are the categorical analogue of
W-types~\cite{abbott2005containers,GambinoN:welftd} and we will make
use of the fact that they exist in toposes with natural number object
in what follows. However, for non-polynomial functors, especially ones
whose specification involves both exponentiation by infinite sets and
taking quotients by equivalence relations (such as~\cref{exa:sym-cont}
below), it is not immediately clear that \ac{} can be avoided. In
fact, we show in \cref{sec:sized-endof} that a much weaker choice
principle than \ac, the ``Weakly Initial Sets of Covers'' \wisc{}
axiom~\cite{StreicherT:reamc,MoerdijkI:weltc,vandenberg2014axiom}, is
enough to ensure that our constructive version of Ad\'amek's theorem
applies to a rich class of endofunctors. \wisc{} has been called
``constructively acceptable'' because it is valid in a wide range of
elementary toposes~\cite{vandenberg2014axiom}. In particular it holds
in presheaf and realizability toposes that have been used to construct
models of dependent type theory that mix quotients and inductive
constructions, which, as we mentioned above, motivates our pursuit of
a constructive treatment of initial algebras.

\section{Constructive meta-theory}

The results in this paper are presented in the usual informal language
of mathematics, but only making use of intuitionistically valid logical
principles (and, to obtain the results of \cref{sec:sized-endof},
extended by the WISC axiom). In particular we avoid use of the Law of
Excluded Middle, or more generally the Axiom of Choice.

More specifically, our results can be soundly interpreted in any
elementary topos with natural number object and
universes~\cite{StreicherT:unit} (satisfying \wisc, for the last part
of the paper). Thus when we refer to the category $\Sets$ of small
sets and functions, we mean the generalised elements of some such
universe, which we always assume contains the subobject classifier. In
fact, in order to interpret quantification over such small sets in a
straightforward way, we tacitly assume there is a countable nested
sequence of such universes, $\Sets=\Sets_0 \in \Sets_1\in\cdots$. A
suitable version of Martin-L\"of's Extensional Type
Theory~\cite{Martin-LoefP:inttt} extended with an impredicative
universe of propositions can be used as the internal language of such
toposes.

In fact the use of impredicative quantification is not necessary: we
have developed a formalisation of the results of this paper using the
Agda proof assistant~\cite{Agda261}, which can provide a dependent
type theory with a predicative universe of (proof irrelevant)
propositions and convenient mechanisms (such as pattern-matching) for
using inductively defined types.  We then have to postulate as axioms
some things which are derivable in the logic of toposes, namely axioms
for propositional extensionality, quotient sets and unique choice (and
\wisc, when we need it). Our Agda development is available at
\cite{agdacode}.

\section{Size-indexed inflationary iteration}
\label{sec:infl-iter}

Throughout this section we fix a large, locally small
category\footnote{The collection of objects is in $\Sets_1$ and the
  collection of morphisms between any pair of objects is in $\Sets$.}
$\C$ and an endofunctor $F:\C\fun\C$. We will consider sequences of
objects in $\C$ built up by iterating $F$ while taking certain
colimits. For simplicity we assume that $\C$ is cocomplete, that is,
has colimits of all small diagrams.\footnote{This means that we are
  given a function assigning a choice of colimit for each small
  diagram, since we work in a constructive setting and in particular
  have to avoid the use of the Axiom of Choice.}

From a constructive point of view, the problem with the sequence
\eqref{eq:ordinal-iteration} is that it makes use of ordinals, which
rely on the Law of Excluded Middle \lem{} for their good properties; in
particular, the definition in \eqref{eq:ordinal-iteration} is by cases
according to whether an ordinal is zero, or a successor, \emph{or
  not}.  In the case that $\C$ is a complete partially ordered set
(with joins denoted by $\bigvee$), Abel and
Pientka~\cite[section~4.5]{AbelA:wellfrc} point out that one can avoid
this case distinction, while still achieving within constructive logic
the same result in the (co)limit, by instead taking the approach of
Sprenger and Dam~\cite{SprengerC:strirc} and using what they term an
\emph{inflationary iteration}:
\begin{equation}
  \label{eq:inflationary-poset}
  \mu_i F = \textstyle\bigvee_{j<i}F(\mu_j F)
\end{equation}
We only need $i$ to range over the elements of a set equipped with a
binary relation $<$ that is well-founded for this definition to make
sense. Here we generalise from complete posets to cocomplete categories,
replacing joins by colimits. \cref{def:size} sums up what we need of the
indexes $i$ and the relation $<$ between them in order to ensure that
the inflationary sequence can be defined and yields an initial algebra
for $F$ if it becomes stationary up to isomorphism.

\begin{definition}
  \label{def:semi-cat}
  Recall that a \emph{semi-category} is like a category, but lacks
  identity morphisms. A semi-category is \emph{thin} if there is at
  most one morphism between any pair of objects. Thus a small thin
  semi-category is the same thing as a set $\kappa$ (the set of
  objects) equipped with a transitive relation
  $\_<\_ \subseteq \kappa\times\kappa$ (the existence-of-a-morphism
  relation). Given such a $(\kappa,{<})$, a \emph{diagram}
  $D:\kappa\fun\C$ in a category $\C$ is by definition a semi-functor
  from $\kappa$ to $\C$: thus $D$ maps each $i\in\kappa$ to a
  $\C$-object $D_i$, each pair $(j,i)$ with $j<i$ to a $\C$-morphism
  $D_{j,i}: D_j\fun D_i$, and these morphisms satisfy
  $D_{j,i}\comp D_{k,j} = D_{k,i}$ for all $k<j<i$ in $\kappa$.
\end{definition}

\begin{definition}
  \label{def:size}
  A \emph{size} is a small thin
  semi-category $(\kappa,{<})$ that is
  \begin{itemize}

  \item \emph{directed}: every finite subset of $\kappa$ has an upper
    bound with respect to $<$; specifically, we assume we are given a
    distinguished element $\sizezero\in\kappa$ and a binary operation
    $\_\ub\_:\kappa\times\kappa\fun\kappa$ satisfying
    $\forall i,j\in\kappa.\; i< {i \ub j}
    \;\conj\; j < {i\ub j}$

  \item \emph{well-founded}: for all $K\subseteq \kappa$, if
    $\forall i\in\kappa.(\forall j< i.\; j \in K)
    \imp i\in K$, then $K = \kappa$.
    \qedhere
  \end{itemize}
  Note that the directedness property in particular gives a successor
  operation $\sizesucc : \kappa\fun\kappa$ on the elements of a size,
  defined by $\sizesucc i \defeq i\ub i$ and satisfying
  $\forall i\in\kappa.\; i < \sizesucc i$. (We do not need a successor
  that also preserves $<$, although the sizes constructed in the next
  section have one that does so.)
\end{definition}

\begin{example}
  \label{exa:nat}
  In the next section we will define a rich class of sizes derived
  from algebraic signatures (see
  \cref{prop:plump-size}). For now, we note that the
  natural numbers $\Nat$ with their usual strict order is a
  size.\footnote{$\Nat$ will be the smallest size once one has developed a
    comparison relation between sizes. To do that one probably has to
    restrict to sizes that are \emph{extensional}, that is, satisfy
    $\forall i,j\in\kappa.\;\{k\in\kappa\mid k<i\} = \{k\in\kappa\mid
    k<j\} \imp i = j$.  However, we have no need of that property for
    the results in this paper.} In classical logic, an ordinal is a
  size iff its usual strict total order is directed, which happens iff
  it is a limit ordinal.
\end{example}

\begin{remark}
  \label{rem:wf-rec}
  Since we are working constructively, the well-foundedness property
  of a size is stated in a suitably positive form; classically, it is
  equivalent to the non-existence of infinite descending chains for
  $<$. Well-foundedness of $<$ allows one to define size-indexed
  families by \emph{well-founded
    recursion}~\cite[section~6.3]{TaylorP:prafm}: given a size
  $\kappa$ and a $\kappa$-indexed family of sets
  $(A_i)_{i\in\kappa}$, from each family of functions
  $(f_i:(\prod_{j<i}A_j)\fun
  A_i)_{i\in\kappa}$ we get a family of elements
  $(a_i\in A_i)_{i\in\kappa}$, uniquely defined by the
  requirement
  $\forall i\in\kappa.\; a_i=
  f_i((a_j)_{j<i})$.
\end{remark}
Given a size $\kappa$, for each element $i\in\kappa$ we get a small thin
semi-category\footnote{Well-foundedness is preserved, but directedness
is not, so \(\subsize(i)\) is not necessarily a size.} $\subsize(i)$
whose vertices are the elements $j\in\kappa$ with $j<i$ and whose
morphisms are the instances of the $<$ relation. Thus a diagram
$D:\subsize(i)\fun\C$ maps each $j<i$ to a $\C$-object $D_j$ and each
pair $(k,j)$ with $k<j<i$ to a $\C$-morphism $D_{k,j}: D_k\fun D_j$,
satisfying $D_{k,j}\comp D_{l,k} = D_{l,j}$ for all $l<k<j<i$. We write
\begin{equation}
  \label{eq:colim}
  (\inc^D_j:D_j \fun \colim_{j<i}D_j)_{j<i}
\end{equation}
for the colimit of this diagram (recall that we are assuming $\C$ is
cocomplete). Thus for all $k<j<i$ it is the case that
$\inc^D_k = \inc^D_j \comp D_{k,j}$; and given any
cocone in $\C$
\[
(f_j : D_j\fun X)_{j<i} \qquad \forall
k<j<i.\; f_k = f_j\comp D_{k,j}
\]
there is a unique $\C$-morphism $\hat{f}:\colim_{j<i}D_j
\fun X$ satisfying $\forall j<i.\; \hat{f}\comp\inc^D_j =
f_j$.

Since $<$ is transitive, if $j<i$ in $\kappa$, then
$\subsize(j)$ is a sub-semi-category of $\subsize(i)$ and each diagram
$D:\subsize(i)\fun\C$ restricts to a diagram
$D|_j:\subsize(j)\fun\C$. We write
\begin{equation}
  \label{eq:inc-hat}
  \c^D_{j,i} : \colim_{k<j}D_k \fun
  \colim_{k<i}D_k
\end{equation}
for the unique $\C$-morphism satisfying $\forall k<j<i.\;
\c^D_{j,i}\comp \inc^{D|_j}_k = \inc^D_k$.

\begin{definition}
  \label{def:infl-iter}
  Let $\kappa$ be a size. Given an endofunctor $F:\C\fun\C$ on a
  cocomplete category $\C$, a diagram $D:\kappa\fun\C$ is an
  \emph{inflationary iteration of $F$ over $\kappa$} if for all
  $i\in\kappa$
 \begin{equation*}
   \label{eq:infl-iter}
   D_i = \colim_{j<i} F(D_j) \;\conj\;
   \forall j<i.\; D_{j,i} = \c^{F\comp D}_{j,i}
   \qedhere
 \end{equation*}
\end{definition}

\begin{lemma}
  \label{lem:infl-iter}
  Given an endofunctor $F:\C\fun\C$ on a cocomplete category $\C$, for
  each size $\kappa$ an inflationary iteration of $F$ over $\kappa$
  exists (and is unique).
\end{lemma}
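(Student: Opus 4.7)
The plan is to use well-founded recursion on $\kappa$ to simultaneously define the objects $D_i$ and the transition morphisms $(D_{j,i})_{j<i}$, then verify the semi-functor equations by a separate well-founded induction, and finally establish uniqueness likewise.

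At stage $i\in\kappa$, assume inductively that for each $j<i$ the object $D_j$, and for each $k<j$ the morphism $D_{k,j}:D_k\fun D_j$, have already been produced, satisfying $D_{k,j}\comp D_{l,k} = D_{l,j}$ for $l<k<j$. This data is exactly a semi-functor $D|_{\subsize(i)}:\subsize(i)\fun\C$, and composing with $F$ yields the diagram $F\comp D|_{\subsize(i)}:\subsize(i)\fun\C$. Since $\C$ is cocomplete with chosen colimits, I would set $D_i\defeq\colim_{j<i}F(D_j)$ and, for each $j<i$, define $D_{j,i}\defeq\c^{F\comp D}_{j,i}$ via~\eqref{eq:inc-hat}. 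A second well-founded induction then verifies $D_{j,i}\comp D_{k,j} = D_{k,i}$ for $k<j<i$: unfolding the definitions of the $\c$-morphisms, both sides agree after precomposition with every coprojection $\inc^{F\comp D}_l:F(D_l)\fun D_k$ for $l<k$, and the uniqueness clause for maps out of the colimit $D_k$ forces the desired equation.

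For uniqueness of the iteration itself, suppose $D$ and $D'$ both satisfy \cref{def:infl-iter}. A well-founded induction on $i$ shows $D_j=D'_j$ and $D_{k,j}=D'_{k,j}$ for all $k<j\leq i$: the inductive hypothesis makes $F\comp D|_{\subsize(i)}$ and $F\comp D'|_{\subsize(i)}$ literally the same diagram, so $D_i$ and $D'_i$ coincide as the chosen colimit of one and the same diagram, and the transition morphisms agree because both are determined by the same universal property.

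The main subtlety I expect is that the recursion is genuinely \emph{dependent}: the type of each $D_{j,i}$, namely $\C(D_j,D_i)$, refers to objects produced at earlier stages. The basic form of well-founded recursion quoted in \cref{rem:wf-rec} constructs simple families $(a_i\in A_i)$ with $A_i$ independent of prior stages, so I would either invoke the dependent version (derivable in the same way from the accessibility characterisation of well-foundedness), or repackage the recursion so that at stage $i$ it outputs a single element of an index-only type bundling $D_i$ together with the cocone $(D_{j,i})_{j<i}$, using the outputs at lower stages to specify the sources of the cocone morphisms. This bookkeeping is routine but is the sole technical care the argument requires.
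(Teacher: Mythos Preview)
Your proposal is correct and follows essentially the same strategy as the paper: well-founded recursion for existence, well-founded induction for the semi-functor law and for uniqueness. The paper resolves the dependent-recursion subtlety you flag via your second suggestion, repackaging so that the recursion at stage $i$ produces an entire ``up-to-$i$'' diagram $D^{(i)}:\subsize(i)\to\C$ (a type depending only on $i$); the trick is to prove uniqueness of such up-to-$i$ iterations \emph{first}, so that the input family $(D^{(j)})_{j<i}$ to the recursion step is automatically coherent (each $D^{(j)}|_k$ equals $D^{(k)}$), which is exactly what is needed to assemble $D^{(i)}$ without circularity.
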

\begin{proof}
  Given $i\in\kappa$, say that a diagram $D:\subsize(i)\fun\C$ is an
  inflationary iteration of $F$ \emph{up to} $i$ if for all $j<i$,
  $D_j = \colim_{k<j}F(D_k)$ and
  $\forall k<j.\; D_{k,j}= \c^{F\comp D}_{k,j}$.  Note that given such
  a diagram, for any $j<i$ we have that $D|_j:\subsize(j)\fun\C$ is
  an inflationary iteration of $F$ up to $j$. Using well-founded
  induction for $<$, one can prove that
  \begin{equation}
    \label{eq:upto-unique}
    \forall i\in\kappa,\;\text{any two inflationary iterations of $F$
      up to $i$ are
      equal}
  \end{equation}
  Then one can use well-founded recursion for $<$
  (\cref{rem:wf-rec}) to define for each $i\in\kappa$ an
  inflationary iteration of $F$ up to $i$,
  $D^{(i)}:\subsize(i)\fun\C$. If $j<i$, then
  $D^{(j)}$ and $D^{(i)}|_j$ are both inflationary
  iterations of $F$ up to $j$ and so are equal by
  \eqref{eq:upto-unique}. From this it follows that
  \[
    (D_i \defeq \colim_{j<i}
    F(D^{(i)}_j))_{i\in\kappa} \;\conj\;
    (D_{j,i} \defeq \c^{F\comp
      D^{(i)}}_{j,i})_{j,i\in\kappa\mid
      j<i}
  \]
  defines an inflationary iteration of $F$. (Furthermore, since any
  such restricts to an up-to-$i$ inflationary iteration, uniqueness
  follows from \eqref{eq:upto-unique}.)
\end{proof}

\begin{remark}
  \label{rem:infl-iter-props}
  We record some simple properties of inflationary iteration that we
  need in the proof of the theorem below. Let $D:\kappa\fun\C$ be the
  inflationary iteration of $F:\C\fun\C$ over $\kappa$. Note that for
  all $j<i$ in $\kappa$, the components of the colimit cocone
  $\inc^{F\comp D|_i}_j: F(D_j) \fun
  \colim_{j<i}F(D_j)$ are morphisms
  $\iota_{j,i}: F(D_j) \fun D_i$ satisfying
  \begin{equation}
    \label{eq:iota-props}
    \forall k<j<i.\; D_{j,i}\comp
    \iota_{k,j} = \iota_{k,i} =
    \iota_{j,i}\comp F(D_{k,j})
  \end{equation}
  The first equation follows from the fact that $D_{j,i} =
  \c^{F\comp D}_{j,i}$ and the second from the definition of
  $\iota_{j,i}$ as a component of a cocone. Since that cocone
  is colimiting, one also has for all $i\in\kappa$ and all
  $\C$-morphisms $f,g:D_i\fun X$ that
  \begin{equation}
    \label{eq:iota-ext}
    (\forall j<i.\; f\comp\iota_{j,i} =
    g\comp\iota_{j,i}) \imp f = g 
  \end{equation}
\end{remark}
The proof of \cref{lem:infl-iter} only used the transitive and
well-founded properties of the relation $<$ on a size $\kappa$,
whereas the following theorem needs its directedness property as well.

\begin{theorem}[\textbf{(Initial algebras via inflationary iteration)}]
  \label{thm:infl-iter}
  Suppose $\C$ is a cocomplete category, $F:\C\fun\C$ is an
  endofunctor and there is a size $\kappa$ such that $F$ preserves
  colimits of diagrams $\kappa\fun\C$. Then $F$ has an initial algebra
  whose underlying $\C$-object is the colimit
  $\mu F = \colim_{i\in\kappa}\mu_i F$ of the inflationary iteration
  $\mu_i F$ (\cref{def:infl-iter}) of $F$ over $\kappa$.
\end{theorem}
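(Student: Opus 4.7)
The plan is to construct the algebra structure $\iota:F(\mu F)\fun\mu F$ directly and then verify initiality by well-founded recursion over $\kappa$, using colimit-preservation by $F$ at two key moments.

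Since $F$ preserves the colimit of the inflationary iteration $D:\kappa\fun\C$, we have $F(\mu F) = \colim_{i\in\kappa} F(D_i)$ with coprojections $F(\inc_i):F(D_i)\fun F(\mu F)$. For each $i\in\kappa$ I would set $c_i \defeq \inc_{\sizesucc i}\comp\iota_{i,\sizesucc i}:F(D_i)\fun\mu F$, using the successor from \cref{def:size} and the morphisms of \cref{rem:infl-iter-props}. To check that $(c_i)_i$ is a cocone over $F\comp D$: for $j<i$, \eqref{eq:iota-props} gives $c_i\comp F(D_{j,i}) = \inc_{\sizesucc i}\comp\iota_{j,\sizesucc i}$; then, picking some $k$ above both $\sizesucc i$ and $\sizesucc j$ by directedness, and using the cocone property of $\inc$ together with \eqref{eq:iota-props} once more, both $c_i\comp F(D_{j,i})$ and $c_j$ collapse to $\inc_k\comp\iota_{j,k}$. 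This yields the unique $\iota$ with $\iota\comp F(\inc_i) = c_i$ for all $i$.

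Next, given any algebra $a:F(A)\fun A$, I would define $a_i:D_i\fun A$ by well-founded recursion on $i\in\kappa$ (\cref{rem:wf-rec}), maintaining inductively that the family $(a_j)_{j<i}$ forms a cocone over $D$ restricted to $\subsize(i)$. In the inductive step, $(a\comp F(a_j):F(D_j)\fun A)_{j<i}$ is a cocone over $F\comp D|_{\subsize(i)}$, so it induces a unique $a_i:D_i\fun A$ with $a_i\comp\iota_{j,i} = a\comp F(a_j)$; extending the invariant to $i$ is a short calculation via \eqref{eq:iota-ext} and \eqref{eq:iota-props}. The resulting cocone on $D$ induces the catamorphism $\lift{a}:\mu F\fun A$ with $\lift{a}\comp\inc_i = a_i$. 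To see $\lift{a}$ is an $F$-algebra morphism, precompose $\lift{a}\comp\iota$ and $a\comp F(\lift{a})$ with each $F(\inc_i)$: both reduce to $a\comp F(a_i)$, so colimit-preservation yields $\lift{a}\comp\iota = a\comp F(\lift{a})$.

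Finally, for uniqueness suppose $h\comp\iota = a\comp F(h)$ and show $h\comp\inc_i = a_i$ by well-founded induction on $i$, using \eqref{eq:iota-ext} to reduce to the claim $h\comp\inc_i\comp\iota_{j,i} = a\comp F(a_j)$ for $j<i$. The key identity here is $\inc_i\comp\iota_{j,i} = \iota\comp F(\inc_j)$, proved by taking an upper bound $k$ of $i$ and $\sizesucc j$ so that both sides collapse to $\inc_k\comp\iota_{j,k}$; the algebra-morphism equation for $h$ and the inductive hypothesis $h\comp\inc_j = a_j$ then finish the step. The main obstacle is not deep but combinatorial: matching up $\inc_i\comp\iota_{j,i}$ with $\iota\comp F(\inc_j)$ via such a common upper bound is the precise place where directedness of $\kappa$ is indispensable, and is what, in the constructive setting, substitutes for the case analysis on limit-versus-successor ordinals in the classical argument.
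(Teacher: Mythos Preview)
Your proposal is correct and follows essentially the same approach as the paper's proof: the construction of $\iota$ via the cocone $(\inc_{\sizesucc i}\comp\iota_{i,\sizesucc i})_{i\in\kappa}$ and the use of colimit-preservation is identical, and your well-founded recursion producing the $a_i$ with $a_i\comp\iota_{j,i}=a\comp F(a_j)$ is exactly the paper's construction of ``up-to-$i$ algebra morphisms'' (the paper separates out their uniqueness first, from which the cocone compatibility $a_i\comp D_{j,i}=a_j$ is derived, whereas you maintain it as an invariant---equivalent packagings). Your explicit identity $\inc_i\comp\iota_{j,i}=\iota\comp F(\inc_j)$, established via a common upper bound, is not singled out in the paper but is the content of its passage from $h_{\sizesucc i}\comp\iota_{i,\sizesucc i}=a\comp F(h_i)$ to \eqref{eq:upto-morphism}; your remark that this is precisely where directedness replaces the classical successor/limit case split is well put.
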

\begin{proof}
  By \cref{lem:infl-iter} there is an inflationary iteration of
  $F:\C\fun\C$ over $\kappa$; call it $D:\kappa\fun\C$ and define
  $\mu F \defeq \colim_{i\in\kappa}D_i$. For each $i\in\kappa$, as in
  \cref{def:size} we have $\sizesucc i\in\kappa$ with
  $i < \sizesucc i$ and hence a $\C$-morphism
  \[
    \iota_i \defeq
    \left(F(D_i)\xrightarrow{\iota_{i,\sizesucc i}}
      D_{\sizesucc i} \xrightarrow{\inc^D_{\sizesucc i}}
      \colim_{i\in\kappa}D_i = \mu F\right)
  \]
  By \eqref{eq:iota-props}, $(\iota_i)_{i\in\kappa}$ is a
  cocone under the diagram $F\comp D : \kappa\fun\C$ and so induces
  $\hat{\iota}: \colim_{i\in\kappa}F(D_i) \fun \mu F$.  Then
  since $F$ preserves the colimit of $D$, we get a morphism
  \begin{equation}
    \label{eq:iota-def}
    \iota \defeq \left(F(\mu F) = F(\colim_{i\in\kappa}D_i) \iso
     \colim_{i\in\kappa} F(D_i) \xrightarrow{\hat{\iota}}
     \mu F\right)
  \end{equation}
  Therefore $\mu F$ has the structure of an $F$-algebra. To see that
  it is initial, suppose we are given $a:F(A)\fun A$. We have to show
  that there is a unique $F$-algebra morphism
  $(\mu F, \iota)\fun (A,a)$.

  If $h:\mu F \fun A$ is such an algebra morphism, that is
  $h\comp\iota = a\comp F(h)$, then by definition of $\iota$ in
  \eqref{eq:iota-def} it follows that the associated cocone
  $(h_i \defeq h \comp \inc^D_i:D_i\fun
  A)_{i\in\kappa}$ satisfies
  $h_{\sizesucc i}\comp \iota_{i,\sizesucc i}= a\comp
  F(h_i)$. From this, using the directedness property of sizes,
  we get
  \begin{equation}
    \label{eq:upto-morphism}
    \forall i\in\kappa.\forall j<i.\; 
    h_i\comp \iota_{j,i} = a\comp F(h_i\circ
    D_{j,i})
  \end{equation}
  So if $h$ and $h'$ are both $F$-algebra morphisms
  $(\mu F, \iota)\fun (A,a)$, one can prove by well-founded induction
  for $<$, using \eqref{eq:iota-ext} and \eqref{eq:upto-morphism},
  that
  $\forall i\in\kappa.\; h\comp \inc^D_i =
  h'\comp\inc^D_i$ and hence that $h=h'$.

  So it just remains to prove that there is such an $h$. It suffices
  to construct a cocone $(h_i:D_i\fun A)_{i\in\kappa}$ satisfying
  \eqref{eq:upto-morphism} and then take $h$ to be the morphism given
  by the universal property of the colimit; for then we have
  $\forall i\in\kappa.\; h_{\sizesucc i}\comp \iota_{i,\sizesucc i} =
  a\comp F(h_i)$ and hence $h\comp\iota = a\comp F(h)$, as required.

  For each $i\in\kappa$, say that a morphism $h':D_i\fun A$ is an
  \emph{up-to-$i$ algebra morphism} if
  $\forall j<i.\;h'\comp\iota_{j,i} = a\comp F(h'\comp D_{j,i})$
  (\emph{cf}.~\eqref{eq:upto-morphism}). Given such a morphism, then
  for any $j<i$, $h'\comp D_{j,i}:D_j\fun A$ is an up-to-$j$ algebra
  morphism. From this it follows by well-founded induction for $<$
  that any two up-to-$i$ algebra morphisms are equal. A well-founded
  recursion for $<$ allows one to construct an up-to-$i$ algebra
  morphism $h_i:D_i\fun A$ for each $i\in\kappa$; and the uniqueness
  of up-to algebra morphisms implies that $h_j=h_i\comp D_{j,i}$ when
  $j<i$. Thus $(h_i)_{i\in\kappa}$ is the required cocone satisfying
  \eqref{eq:upto-morphism}.
\end{proof}

\begin{corollary}
  \label{cor:infl-iter}
  With the same assumptions on $\C$, $F$ and $\kappa$ as in
  \cref{thm:infl-iter}, then free $F$-algebras exist, that is,
  the forgetful functor from the category of $F$-algebras to $\C$ has
  a left adjoint.
\end{corollary}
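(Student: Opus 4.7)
The approach is to reduce existence of free $F$-algebras to existence of initial algebras for a family of auxiliary endofunctors. For each object $X\in\C$, define $F_X:\C\fun\C$ by $F_X(Y) \defeq X + F(Y)$. By the universal property of coproducts, an $F_X$-algebra structure on $Y$ is precisely the data of a morphism $X\fun Y$ together with an $F$-algebra structure on $Y$, and $F_X$-algebra morphisms are exactly those $F$-algebra morphisms that commute with the $X$-components. Consequently, an initial $F_X$-algebra is the same thing as a free $F$-algebra on $X$.

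I then verify that $F_X$ satisfies the hypotheses of \cref{thm:infl-iter} for the same size $\kappa$. Cocompleteness of $\C$ is given, and the only nontrivial point is that $F_X$ preserves colimits of diagrams $\kappa\fun\C$. Since $F_X = (X + \_)\comp F$ and $F$ already preserves $\kappa$-colimits by hypothesis, it suffices to check that $X + \_$ preserves them. The underlying semi-category of any size $\kappa$ is connected: it is inhabited by $\sizezero$, and any two elements $i,j\in\kappa$ are joined in one step via $i < i\ub j$ and $j < i\ub j$. A standard argument — the $X$-summand of any cocone under $X + D$ is forced to be constant along a connected diagram — then yields $\colim_{i\in\kappa}(X + D_i) \iso X + \colim_{i\in\kappa}D_i$. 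Hence \cref{thm:infl-iter} supplies an initial $F_X$-algebra, whose underlying object I denote $\mu F_X$ and whose structure map splits into components $\eta_X : X\fun \mu F_X$ and $\iota_X : F(\mu F_X)\fun \mu F_X$.

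To finish, I assemble the left adjoint. Given any $F$-algebra $(A,a)$ and any morphism $f:X\fun A$, the copairing $[f,a] : X + F(A)\fun A$ equips $A$ with an $F_X$-algebra structure, and initiality of $\mu F_X$ furnishes a unique $F_X$-algebra morphism $\mu F_X\fun A$; this unpacks to the unique $F$-algebra morphism $\lift{f}:\mu F_X\fun A$ satisfying $\lift{f}\comp\eta_X = f$, exactly the universal property exhibiting $(\mu F_X,\eta_X)$ as the value at $X$ of a left adjoint to the forgetful functor. The only step demanding genuine attention is preservation of $\kappa$-colimits by $X + \_$: classically this is folklore, but constructively one must check that the reasoning uses only inhabitedness and directedness of $\kappa$, both of which are built into \cref{def:size}.
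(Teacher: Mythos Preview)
Your proof is correct and follows essentially the same route as the paper: reduce the free $F$-algebra on $X$ to an initial algebra for $Y\mapsto X+F(Y)$, then observe that this auxiliary endofunctor still preserves $\kappa$-colimits because $\kappa$ is directed (hence connected), so \cref{thm:infl-iter} applies. The paper is terser---it just cites directedness and points forward to the fact that constant functors are sized---whereas you spell out the connectedness argument and the unpacking of the adjunction, but the underlying idea is identical.
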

\begin{proof}
  The free $F$-algebra on an object $X\in\C$ is the same thing as an
  initial algebra for the endofunctor $F(\_)+X$. So by the theorem, it
  suffices to check that $F(\_)+X$ preserves colimits of diagrams
  $\kappa\fun\C$. It does so because $F$ does by assumption and
  because $\kappa$ is directed
  (cf.~\cref{prop:sized-closure}\eqref{item:4} below).
\end{proof}

\section{Initial algebras for sized endofunctors}
\label{sec:sized-endof}

In classical set theory with the Axiom of Choice, given a set of
operation symbols $A\in\Sets$ with associated arities $B\in\Sets^A$,
the associated polynomial endofunctor $X\mapsto \sum_{a\in A}X^{B(a)}$
on $\Sets$ preserves $\lambda$-colimits when the ordinal $\lambda$ is
large enough; specifically it does so if for all $a\in A$, $\lambda$
has upper bounds (with respect to the strict total order given by
membership) for all $B(a)$-indexed families of ordinals less than
$\lambda$. We will see that this notion of ``large enough'' is also
the right one for sizes in our constructive setting.

\begin{definition}
  \label{def:filtered}
  A \emph{signature} (also known as a
  \emph{container}~\cite{abbott2005containers,GambinoN:welftd}) is
  specified by a set $A\in\Sets$ and an $A$-indexed family of sets
  $B\in\Sets^A$. We write $\Sig \in \Sets_1$ for the large set of all
  such signatures.  Given $\Sigma=(A,B)\in\Sig$, we say that a size
  $(\kappa,{<})$ is \emph{$\Sigma$-filtered} if for all $a\in A$ and
  every function $f:B(a) \fun \kappa$, there exists $i\in\kappa$ with
  $\forall x\in B(a).\; f(x) < i$.
\end{definition}
We can deduce the existence of $\Sigma$-filtered sizes by abstracting
from the constructive analysis of Conway's surreal numbers by
Shulman~\cite{ShulmanM:surcpo}, which in turn is inspired by Taylor's
constructive notion of ``plump'' ordinal~\cite{TaylorP:intso}.  For
each $\Sigma=(A,B)\in\Sig$, let $W_{\Sigma}$ be the initial algebra
for the associated polynominal endofunctor $F_{A,B}:\Sets\fun\Sets$,
$F_{A,B}(X) = \sum_{a\in A}X^{B(a)}$. Thus $W_\Sigma$ is an example of
a W-type~\cite[Chapter~15]{NordstromB:progmlt}. The function
$\Sigma \mapsto W_{\Sigma}$ exists in our constructive setting,
because W-types can be constructed in elementary toposes with natural
number objects~\cite[Proposition~3.6]{MoerdijkI:weltc}; one can take
the elements of $W_\Sigma$ to be well-founded trees representing the
algebraic terms inductively generated by the signature $\Sigma$. Each
such term $t$ is uniquely of the form $\sup_{a} f$ where $\sup_{a}$ is
the $B(a)$-arity operation symbol named by $a\in A$ and, inductively,
$f=(t_x)_{x\in B(a)}$ is a $B(a)$-tuple of well-founded algebraic
terms over $\Sigma$.  The \emph{plump} ordering on $W_\Sigma$ is given
by the least relations ${\_<\_}\subseteq W_\Sigma\times W_\Sigma$ and
${\_\leq\_}\subseteq W_\Sigma\times W_\Sigma$ satisfying for all
$a\in A$, $f:B(a)\fun W_\Sigma$ and $t\in W_\Sigma$
\begin{equation}
  (\forall x\in B(a).\; f(x) < t) \imp \textstyle\sup_a f \leq
  t\quad\text{and}\quad
  (\exists x\in B(a).\; t \leq f(x)) \imp t < \textstyle\sup_a f
  \label{eq:plump}
\end{equation}
As noted in~\cite[Example~5.4]{PittsAM:quoitq}, $<$ is transitive and
well-founded, and $\leq$ is a preorder (reflexive and transitive). In
particular, since $\leq$ is reflexive, from \eqref{eq:plump} we deduce
that $\forall x\in B(a).\; f(x) < \sup_a f$, in other words for each
arity set $B(a)$ in the signature, any function $f:B(a)\fun W_\Sigma$
is bounded above in the $<$ relation by $\sup_a f$. This allows us to
construct $\Sigma$-filtered sizes:

\begin{proposition}
  \label{prop:plump-size}
  There is a $\Sigma$-filtered size $(\kappa_\Sigma , {<})$ for every
  signature $\Sigma$.
\end{proposition}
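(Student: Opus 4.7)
The plan is to build $\kappa_\Sigma$ as a W-type equipped with Taylor's plump strict order, after first augmenting $\Sigma$ with the operation symbols needed to witness the directedness structure of \cref{def:size}. Concretely, I form $\Sigma^+ = (A + 2, B^+)$, where $2 = \{0,1\}$ indexes two fresh symbols, $B^+$ restricted to $A$ is $B$, $B^+(0) \defeq \emptyset$ (a nullary symbol) and $B^+(1) \defeq 2$ (a binary symbol). Take the underlying set of $\kappa_\Sigma$ to be the W-type $W_{\Sigma^+}$, which exists in the ambient topos by \cite[Proposition~3.6]{MoerdijkI:weltc}, and equip it with the plump relation $\_<\_$ defined by \eqref{eq:plump} instantiated at $\Sigma^+$.

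Transitivity and well-foundedness of $<$ are recorded in the paragraph preceding the proposition, so $(\kappa_\Sigma, <)$ is automatically a small thin well-founded semi-category. For directedness I set $\sizezero \defeq \sup_0 e$, with $e$ the unique function $\emptyset \fun W_{\Sigma^+}$, and $i \ub j \defeq \sup_1 \langle i, j\rangle$, where $\langle i, j\rangle : 2 \fun W_{\Sigma^+}$ sends $0 \mapsto i$ and $1 \mapsto j$. Reflexivity of $\leq$ combined with the second clause of \eqref{eq:plump}, witnessed first by $0$ and then by $1$, yields $i < i \ub j$ and $j < i \ub j$ respectively. Finally, $\Sigma$-filteredness is immediate: for each $a \in A$ we have $B^+(a) = B(a)$, and for any $f : B(a) \fun W_{\Sigma^+}$ the element $\sup_a f$ strictly bounds $f$, by the same reflexivity-plus-second-clause argument.

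The only real obstacle is the conceptual one handled by the augmentation step: an arbitrary $\Sigma$ might contain neither a nullary nor a binary operation symbol, so $W_\Sigma$ alone could easily fail to possess either base elements or any means of producing binary upper bounds. Once enlarged to $\Sigma^+$, every requirement of \cref{def:size} and \cref{def:filtered} drops out at once from the already-established properties of the plump order on W-types.
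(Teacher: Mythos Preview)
Your proof is correct and matches the paper's own argument essentially verbatim: the paper likewise augments $\Sigma$ with a fresh nullary and a fresh binary operation symbol, takes $\kappa_\Sigma$ to be the W-type for the enlarged signature equipped with the plump order, and reads off directedness and $\Sigma$-filteredness exactly as you do. The only differences are cosmetic (the paper names the new symbols $n,b$ rather than $0,1$).
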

\begin{proof} Given a signature $\Sigma=(A,B)$, we extend it to a
  signature $(A',B')$ by adding fresh nullary and binary operation
  symbols. Thus $A' \defeq A\uplus\{n,b\}$ and $B'\in\Sets^{A'}$
  satisfies $B'(a) \defeq B(a)$ for $a\in A$, $B'(n)\defeq \emptyset$
  and $B'(b)\defeq \{0,1\}$. Let set $\kappa_\Sigma$ be the W-type
  $W_{(A',B')}$ and let $<$ be the plump order given by
  \eqref{eq:plump}. As noted above, $<$ is transitive and well-founded
  and has upper bounds for any arity-indexed family and hence in
  particular it is $\Sigma$-filtered. It just remains to see that it
  is directed (\cref{def:size}). Since $A'$ contains the
  nullary operation symbol $n$, $\kappa_\Sigma$ contains
  $\sizezero\defeq \sup_n\emptyset$; and given $i,j\in\kappa_\Sigma$,
  letting $f:B'(b)=\{0,1\}\fun \kappa_\Sigma$ map $0$ to $i$ and $1$
  to $j$, then $i\ub j \defeq \sup_b f$ is an upper bound for $i$ and
  $j$ with respect to $<$.
\end{proof}

\begin{definition}
  \label{def:sized}
  Given a signature $\Sigma\in\Sig$, a functor $F:\C\fun\D$ between
  cocomplete categories is \emph{$\Sigma$-sized} if it preserves
  colimits of all diagrams $\kappa\fun\C$ for any $\Sigma$-filtered
  size $\kappa$. A functor is \emph{sized} it there exists a signature
  $\Sigma$ for which it is $\Sigma$-sized.
\end{definition}

\begin{theorem}[\textbf{(Sized endofunctors have initial algebras)}]
  \label{thm:sized-endof}
  Assuming $\C$ is a cocomplete category, if $F:\C\fun\C$ is sized,
  then there exists an initial algebra for $F$. More precisely, there
  is a function assigning to each signature $\Sigma$ and each
  $\Sigma$-sized endofunctor $F$ an initial algebra for $F$.
\end{theorem}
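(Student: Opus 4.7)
The plan is simply to compose the two main results of this section so far, \cref{prop:plump-size} and \cref{thm:infl-iter}. First I would invoke \cref{prop:plump-size} to obtain, from the given signature $\Sigma$, the specific $\Sigma$-filtered size $\kappa_\Sigma$ (constructed there as the W-type associated with $\Sigma$ extended by a nullary and a binary operation symbol, equipped with the plump order). Crucially, this assignment $\Sigma\mapsto\kappa_\Sigma$ is given by a definite construction depending functionally on $\Sigma$, with no appeal to choice.

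Next I would unfold \cref{def:sized}: since $F$ is $\Sigma$-sized, $F$ preserves colimits of every diagram whose shape is a $\Sigma$-filtered size; in particular, $F$ preserves colimits of diagrams $\kappa_\Sigma\fun\C$. This is exactly the hypothesis of \cref{thm:infl-iter} with $\kappa \defeq \kappa_\Sigma$, so that theorem delivers an initial $F$-algebra $(\mu F,\iota)$ whose underlying object is the colimit $\colim_{i\in\kappa_\Sigma}\mu_i F$ of the (unique, by \cref{lem:infl-iter}) inflationary iteration of $F$ over $\kappa_\Sigma$.

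For the stronger second sentence of the statement, about a function rather than a mere existence, I would note that each link in the chain just described is given by an explicit construction rather than by a non-constructive existence claim: \cref{prop:plump-size} produces $\kappa_\Sigma$ as a definite term of $\Sig$-indexed type, \cref{lem:infl-iter} yields the inflationary iteration by well-founded recursion (and uniquely so), and the algebra structure $\iota$ built in the proof of \cref{thm:infl-iter} is assembled from the chosen colimits in $\C$ together with canonical universal-property data. Composing these assignments gives a function sending each pair $(\Sigma, F)$ (with $F$ a $\Sigma$-sized endofunctor on $\C$) to a definite initial $F$-algebra, as required.

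I do not foresee any real technical obstacle: all the work has already been packaged into \cref{prop:plump-size}, \cref{lem:infl-iter} and \cref{thm:infl-iter}. The only thing demanding care is matching the two formulations of the size hypothesis, namely checking that "$F$ is $\Sigma$-sized" in the sense of \cref{def:sized} instantiates to "$F$ preserves colimits of diagrams $\kappa_\Sigma\fun\C$" as required by \cref{thm:infl-iter}; this is immediate once \cref{prop:plump-size} tells us that $\kappa_\Sigma$ is indeed $\Sigma$-filtered.
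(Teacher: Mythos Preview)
Your proposal is correct and matches the paper's own proof essentially line for line: invoke \cref{prop:plump-size} to obtain the $\Sigma$-filtered size $\kappa_\Sigma$, observe that a $\Sigma$-sized $F$ therefore preserves $\kappa_\Sigma$-shaped colimits, and apply \cref{thm:infl-iter}. Your additional remarks about the functional nature of the construction are a welcome elaboration of the second sentence of the theorem, which the paper leaves implicit.
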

\begin{proof}
  If $F$ is $\Sigma$-sized for some $\Sigma\in\Sig$, then $F$
  preserves colimits of diagrams for the $\Sigma$-filtered size
  $\kappa_\Sigma$ constructed in the proof of
  \cref{prop:plump-size}. Hence by \cref{thm:infl-iter}, it has an
  initial algebra, given by taking the colimit of its inflationary
  iteration.
\end{proof}
To apply this theorem one needs a rich collection of sized
functors. The rest of the section is devoted to exploring closure
properties of sized functors. To do so we use the following operation
on signatures:

\begin{definition}
  \label{def:sig-sum}
  Suppose $\Sigma_c=(A_c,B_c)$ is a family of signatures indexed by
  the elements $c$ of some set $C$. Then the \emph{signature sum}
  $\bigoplus_{c\in C}\Sigma_c$ is the signature $(A,B)$ where
  $A\defeq \sum_{c\in C}A_c = \{(c,a) \mid c\in C \conj a \in A_c\}$
  and $B\in\Sets^A$ maps each $(c,a)$ to the set $B_c(a)$.  As a
  special case when $I=\{0,1\}$, we have the \emph{binary sum}
  $\Sigma_0\oplus\Sigma_1$. There is also an \emph{empty signature}
  $0=(\emptyset,\emptyset)$ which acts as a unit for $\oplus$ up to
  isomorphism (for a suitable notion of signature morphism).
\end{definition}

\begin{remark}
  \label{rem:sig-sum}
  Note that if a size is $(\bigoplus_{c\in C}\Sigma_c)$-filtered, it
  is also $\Sigma_c$-filtered for each $c\in C$. Conversely, given a
  single signature $\Sigma$, if a size is $\Sigma$-filtered, it is
  also $(\bigoplus_{c\in C}\Sigma)$-filtered.
\end{remark}

\begin{proposition}
  \label{prop:sized-closure}
  Suppose that  $\C$,$\D$ and $\E$ are cocomplete categories.
  \begin{enumerate}
  \item\label{item:0} Any cocontinuous functor $\C\fun\D$ is sized.

  \item\label{item:2} Identity functors are sized. If $F:\C\fun\D$ and
    $G:\D\fun\E$ are sized, so is their composition
    $G\comp F: \C\fun \E$.
    
  \item\label{item:3} The terminal functor $\C\fun 1$ and the
    projection functors $\pi_1:\C\times\D\fun \C$ and
    $\pi_2:\C\times\D\fun \C$ are sized; if $F:\C\fun \D$ and
    $G:\C\fun \E$ are sized, then so is $\langle F, G\rangle:\C\fun
    \D\times \E$.

  \item\label{item:4} For any $X\in\C$ the constant functor $1\fun \C$
    with value $X$ is sized.
  \end{enumerate}
\end{proposition}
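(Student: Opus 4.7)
The plan is, for each item, to choose an appropriate signature witnessing sized-ness and verify the corresponding colimit-preservation condition. The main tool is \cref{rem:sig-sum}: for signatures $\Sigma_0,\Sigma_1$, any $(\Sigma_0\oplus\Sigma_1)$-filtered size is simultaneously $\Sigma_0$- and $\Sigma_1$-filtered, which allows several sized functors to share a single witnessing size.

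Items \eqref{item:0}--\eqref{item:3} reduce to this signature-sum trick together with standard facts about product categories. For \eqref{item:0}, a cocontinuous functor preserves all small colimits, so in particular it preserves colimits of every $\kappa$-indexed diagram and is therefore $\Sigma$-sized for any signature $\Sigma$ (for instance the empty one). For \eqref{item:2}, identity functors are cocontinuous, hence sized by \eqref{item:0}; and given $F:\C\fun\D$ that is $\Sigma_F$-sized and $G:\D\fun\E$ that is $\Sigma_G$-sized, the composite $G\comp F$ is $(\Sigma_F\oplus\Sigma_G)$-sized, since for any such $\kappa$ and diagram $D:\kappa\fun\C$, $F$ preserves the colimit of $D$ (yielding a diagram $F\comp D:\kappa\fun\D$) whose colimit is in turn preserved by $G$. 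For \eqref{item:3}, the terminal functor $\C\fun 1$ is trivially sized; the projections $\pi_1,\pi_2$ are cocontinuous (colimits in the product category are computed componentwise), hence sized by \eqref{item:0}; and the pairing $\langle F,G\rangle$ of two sized functors is $(\Sigma_F\oplus\Sigma_G)$-sized by the same componentwise argument.

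Item \eqref{item:4} is the only case that does not immediately reduce to \eqref{item:0}. For a constant functor $c_X:1\fun\C$ with value $X$, the claim is that $c_X$ is $\Sigma$-sized for any $\Sigma$. Given a $\Sigma$-filtered size $\kappa$ and the unique diagram $D:\kappa\fun 1$, the value $c_X(\colim D)$ is $X$, while $c_X\comp D:\kappa\fun\C$ is the constant-$X$ diagram sending every arrow $D_{j,i}$ to the identity on $X$. The key step is to show that this constant diagram has colimit $X$ with identity inclusions: any cocone $(f_i:X\fun Y)_{i\in\kappa}$ must satisfy $f_i=f_j$ for all $i,j\in\kappa$, because directedness provides an upper bound $k$ with $i<k$ and $j<k$, forcing $f_i=f_k=f_j$ via the cocone condition (and $\sizezero\in\kappa$ ensures the index is non-empty).

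The main (mild) obstacle is item \eqref{item:4}: it is the one place where the full directedness property of a size is used essentially rather than merely inherited through the signature machinery. All other cases are routine once one packages the composition and pairing arguments through the signature sum of \cref{def:sig-sum}.
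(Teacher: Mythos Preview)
Your proposal is correct and follows essentially the same approach as the paper: cocontinuous functors are sized for the empty signature, composition and pairing are handled via the signature sum $\Sigma_F\oplus\Sigma_G$ together with \cref{rem:sig-sum}, and the product-category cases rest on colimits being computed componentwise. The only cosmetic difference is in item~\eqref{item:4}: the paper phrases the argument as ``$\kappa$ is directed, hence a connected semi-category, so $\colim_{i\in\kappa}X\iso X$,'' whereas you spell out the underlying cocone calculation (upper bounds force $f_i=f_j$, and $\sizezero$ ensures non-emptiness)---but this is exactly the content of the connectedness claim.
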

\begin{proof}
  For part~\ref{item:0}, if $F:\C\fun\D$ is cocontinuous, then it is
  $\Sigma$-sized for any $\Sigma$ and in particular for the empty
  signature.

  The first sentence of part~\ref{item:2} follows from
  part~\ref{item:0}. If $F$ is $\Sigma$-sized and $G$ is
  $\Sigma'$-sized, then $F$ and $G$ both preserve colimits over any
  $\Sigma\oplus\Sigma'$-filtered size, because such a size is also
  $\Sigma$-{} and $\Sigma'$-filtered. The composition $G\comp F$
  preserves such a colimit because $F$ and $G$ do. Therefore $G\comp F$
  is  $\Sigma\oplus\Sigma'$-sized.
  
  For part~\ref{item:3} we use the fact that colimits in a product
  category are computed componentwise. Thus the terminal and
  projection functors are sized by part~\ref{item:0}; and if
  $F:\C\fun \D$ is $\Sigma$-sized and %
  $G:\C\fun \E$ is $\Sigma'$-sized, then $\langle F, G\rangle$ is
  $\Sigma\oplus\Sigma'$-sized.

  For part~\ref{item:4}, note that each size $\kappa$ is directed and
  hence in particular is a connected semi-category; therefore
  $\colim_{i\in\kappa} X$ is canonically isomorphic to $X$. So the
  constant functor with value $X$ is $\Sigma$-sized for any $\Sigma$
  and in particular for the empty signature.
\end{proof}
We can deduce further preservation properties involving infinitary
operations on sized functors by assuming a weak form of choice, which
following \url{https://ncatlab.org/nlab/show/WISC} we call the \wisc{}
axiom. It was introduced in type theory by
Streicher~\cite{StreicherT:reamc} under the name $\mathrm{TTCA}_f$
(``Type Theoretic Collection Axiom'') and independently in
constructive set theory by van~den~Berg and
Moerdijk~\cite{vandenberg2014axiom} under the name ``Axiom of Multiple
Choice''; see also Levy~\cite[Section~5.1]{LevyP:broigp}.
\begin{axiom}[\wisc{}]
  \label{axi:wisc}
  A (possibly large) \emph{cover} of a set $X\in\Sets$ is a surjective
  function $f:Y\surj X$ with $Y\in\Sets_1$.  An indexed
  family\footnote{We will refer to elements of $\Sig$ as
    \emph{families} rather than \emph{signatures} when we are not
    thinking of them as collections of operation symbols of
    set-valued arity.}  $(E_c)_{c\in C} \in \Sig$ is a \emph{wisc} for
  $X\in\Sets$ if for any cover $f:Y\surj X$, there exist $c\in C$ and
  $g : E_c \fun Y$ such that $f\comp g$ is surjective.  The \wisc{}
  axiom\footnote{For simplicity and following \cite{StreicherT:reamc},
    we have given the axiom just for a pair of universes,
    $(\Sets_0,\Sets_1)$; more generally one can ask for the property
    to hold for any pair $(\Sets_m,\Sets_n)$.}  states that for every
  $X\in\Sets$ there exists a family $(E_c)_{c\in C}\in\Sig$ that is a
  wisc for it.
\end{axiom}
``Wisc'' stands for ``weakly initial set of covers'' and the
terminology is justified by the fact that if in $\Sets$ the family
$(E_c)_{c\in C}$ is a wisc for $X$, then the family of covers of $X$
whose domains are of the form $E_c$ for some $c\in C$ is weakly
initial among all the (possibly large) covers of $X$: for every
$Y\in\Sets_1$ and $f:Y\surj X$, there is some cover $e:E_c\surj X$ in
the family that factors as $e=f \comp g$ for some $g : E_c \fun Y$.

Classically, \wisc{} is implied by the Axiom of Choice \ac, since the
latter implies that every surjection has a right inverse and hence the family
whose single member is $X$ is a wisc for $X$. From the results of
van~den~Berg and Moerdijk~\cite{vandenberg2014axiom} (and as noted by
Streicher~\cite{StreicherT:reamc}), if any elementary topos $\E$
satisfies \wisc{}, then so do toposes of (pre)sheaves and
realizability toposes built from $\E$; it is in this sense that the
axiom is constructively acceptable. In particular, starting from the
category of sets in classical set theory with \ac, \wisc{} holds in the
kinds of topos that have been used to model type theory with various
kinds of higher inductive types, whose semantics motivates the work
presented here. (However, it does not hold in all
toposes~\cite{RobertsDM:weacpw}.)

\begin{lemma}[\wisc]
  \label{lem:fam-size}
  Suppose \wisc{} holds and that $\C$ and $\D$ are cocomplete
  categories.  If $(F_x:\C\fun\D)_{x\in X}$ is a family of
  sized functors indexed by a set $X\in\Sets$, then there exists a
  signature $\Sigma\in\Sig$ such that $F_x$ is $\Sigma$-sized for all
  $x\in X$.
\end{lemma}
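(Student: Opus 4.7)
The plan is to use \wisc{} to choose a single signature that works for all $x\in X$ uniformly, avoiding full \ac. Concretely, I form the (possibly large) set of witnesses
\[
  Y \;\defeq\; \{(x,\Sigma)\in X\times\Sig \mid F_x\text{ is $\Sigma$-sized}\} \;\in\; \Sets_1,
\]
and observe that the first projection $p:Y\fun X$ is a surjection, by hypothesis that each $F_x$ is sized. So $p$ is a (large) cover of $X\in\Sets$.

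Next I apply \wisc{} to $X$ to obtain a family $(E_c)_{c\in C}\in\Sig$ that is a wisc for $X$. Applying its defining property to the cover $p:Y\surj X$ yields some $c\in C$ and a function $g:E_c\fun Y$ such that $p\comp g:E_c\fun X$ is surjective. For each $e\in E_c$, write $(x_e,\Sigma_e)\defeq g(e)$, so by construction $F_{x_e}$ is $\Sigma_e$-sized. Since $E_c\in\Sets$, the signature sum
\[
  \Sigma \;\defeq\; \bigoplus_{e\in E_c}\Sigma_e \;\in\;\Sig
\]
makes sense (\cref{def:sig-sum}).

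Finally I verify that every $F_x$ is $\Sigma$-sized. Fix $x\in X$; by surjectivity of $p\comp g$ there exists $e\in E_c$ with $x_e=x$, so $F_x=F_{x_e}$ is $\Sigma_e$-sized. Now let $\kappa$ be any $\Sigma$-filtered size; by \cref{rem:sig-sum}, $\kappa$ is also $\Sigma_e$-filtered, and hence $F_x$ preserves colimits of diagrams $\kappa\fun\C$. Thus $F_x$ is $\Sigma$-sized, as required.

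The only genuine obstacle is that the naive argument ``let $\Sigma_x$ be a signature for which $F_x$ is $\Sigma_x$-sized and take $\bigoplus_{x\in X}\Sigma_x$'' requires choosing one signature per $x$, which is an application of \ac{} to the fibres of $p:Y\surj X$. The role of \wisc{} is precisely to sidestep this: it replaces the possibly large fibres of $p$ by a small-set-indexed refinement $E_c\fun Y$ whose composite with $p$ still covers $X$, after which the signature-sum construction of \cref{def:sig-sum} together with \cref{rem:sig-sum} does the rest.
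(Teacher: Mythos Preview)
Your proof is correct and follows essentially the same approach as the paper's: form the large set of pairs $(x,\Sigma)$ witnessing sizedness, observe that projection to $X$ is a large cover, apply \wisc{} to obtain a small surjection through that cover, and take the signature sum over the resulting small index set, invoking \cref{rem:sig-sum} to conclude. The only differences are notational (the paper writes the indexing set as $X'$ rather than $E_c$) and that you spell out the application of the wisc property more explicitly.
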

\begin{proof}
  Consider the large set
  $S \defeq \sum_{x\in X}\{\Sigma'\in\Sig \mid \text{$F_x$ is a
    $\Sigma'$-sized functor}\}$ in $\Sets_1$. By assumption on $F$, the
  first projection $\pi_1:S\fun X$ is a large\footnote{This proof,
    as well as that for \cref{lem:wisc}, illustrates the need for a
    wisc property that quantifies over large covers of small sets.}
  cover of $X$. By \wisc{} there is some surjection $e:X'\surj X$ in
  $\Sets$ and a function $\Sigma':X'\fun \Sig$ so that for all
  $x'\in X'$, the functor $F_{e(x')}$ is $\Sigma'_{x'}$-sized; and
  since $e$ is surjective this implies that each $F_x$ is
  $\Sigma'_{x'}$-sized for some $x'\in X'$. Consider the signature
  $\Sigma \defeq \bigoplus_{x'\in X'}\Sigma'_{x'}$ from
  \cref{def:sig-sum}. By \cref{rem:sig-sum}, each $F_x$ is
  $\Sigma$-sized.
\end{proof}

\begin{theorem}[\wisc{} \textbf{(Colimits of sized functors)}]
  \label{thm:colim}
  Suppose that \wisc{} holds, $\C$ and $\D$ are cocomplete categories,
  $\cat$ is a small category and that $F:\cat\times\C\fun\D$ is a
  functor. If for some signature $\Sigma\in\Sig$ the functor $F(c,\_)$
  is $\Sigma$-sized for each $c\in\cat$, then
  $\colim_{c\in\cat}F(c,\_):\C\fun\D$ is also $\Sigma$-sized. More
  generally, if each $F(c,\_)$ is sized, then so is
  $\colim_{c\in\cat}F(c,\_)$.
\end{theorem}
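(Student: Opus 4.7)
The plan is to handle the two parts of the statement in turn. The first part—where a common $\Sigma$ already makes every $F(c,\_)$ a $\Sigma$-sized functor—does not actually require \wisc{}; it reduces to commuting two colimits. The more general clause then uses \wisc{} via Lemma~\ref{lem:fam-size} to manufacture such a common $\Sigma$ from the weaker hypothesis that each $F(c,\_)$ is merely sized.

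For the first part, fix a $\Sigma$-filtered size $\kappa$ and a diagram $D:\kappa\fun\C$. I would show that the canonical comparison $\colim_{i\in\kappa}\colim_{c\in\cat}F(c,D_i)\fun \colim_{c\in\cat}F(c,\colim_{i\in\kappa}D_i)$ is an isomorphism, which is exactly the preservation property that $\colim_{c\in\cat}F(c,\_)$ must satisfy. Since each $F(c,\_)$ is $\Sigma$-sized and $\kappa$ is $\Sigma$-filtered, we have $F(c,\colim_{i\in\kappa}D_i)\iso \colim_{i\in\kappa}F(c,D_i)$ naturally in $c$; taking the colimit in $c$ then yields $\colim_{c\in\cat}F(c,\colim_{i\in\kappa}D_i)\iso \colim_{c\in\cat}\colim_{i\in\kappa}F(c,D_i)$. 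It remains to interchange the two colimits, which is the standard fact that colimits commute with colimits. I would verify this by exhibiting mutually inverse morphisms through the universal properties of the colimits involved, so no choice principle is required at this step.

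For the second part, since $\cat$ is small, the family $(F(c,\_))_{c\in\mathrm{ob}(\cat)}$ is indexed by the set $\mathrm{ob}(\cat)\in\Sets$. Lemma~\ref{lem:fam-size} (which is where \wisc{} enters) then supplies a single signature $\Sigma\in\Sig$ such that every $F(c,\_)$ is $\Sigma$-sized, and the first part concludes that $\colim_{c\in\cat}F(c,\_)$ is $\Sigma$-sized, hence sized.

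The main obstacle is the interchange-of-colimits step. Classically this is routine, but here one colimit is over a genuine small category $\cat$ while the other is over a semi-category $\kappa$ (lacking identities), so I would want to check that the standard Fubini-style argument—factoring both iterated colimits through the colimit of the composite diagram on the product semi-category $\cat\times\kappa$ and using initiality—remains valid without identities on one factor. Since the argument only invokes universal properties of cocones, no new difficulty should arise.
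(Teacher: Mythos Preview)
Your proposal is correct and matches the paper's own proof almost exactly: the paper also derives the $\Sigma$-sized case by using the natural isomorphism $F(c,\colim_{i\in\kappa}D_i)\iso\colim_{i\in\kappa}F(c,D_i)$, taking $\colim_{c\in\cat}$, and then invoking ``colimits commute with colimits'', and it reduces the general clause to this via Lemma~\ref{lem:fam-size}. Your observation that \wisc{} is only needed for the second clause, and your explicit flag about the Fubini step when one index is a semi-category, are both accurate refinements that the paper leaves implicit.
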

\begin{proof}
  If $F(c,\_):\C\fun\D$ is $\Sigma$-sized for all $c\in\cat$ and
  $\kappa$ is a $\Sigma$-filtered size, then each $F(c,\_)$ preserves
  colimits of all diagrams $\kappa\fun\C$. Thus given such a diagram
  $D:\kappa\fun\C$, we have a canonical isomorphism
  $F(c,\colim_{i\in\kappa}D_i) \iso \colim_{i\in\kappa}F(c, D_i)$,
  natural in $c$. Taking the colimit over $c\in\cat$ and writing
  $F' \defeq \colim_{c\in\cat}F(c,\_)$, we have
  $
    F'(\colim_{i\in\kappa}D_i) =
    \colim_{c\in\cat}F(c,\colim_{i\in\kappa}D_i) \iso
    \colim_{c\in\cat}\colim_{i\in\kappa} F(c,D_i)
  $.
  Since colimits commute with each other, it follows that the
  canonical morphism
  $F'(\colim_{i\in\kappa}D_i)\fun\colim_{i\in\kappa}F'(D_i)$ is an
  isomorphism. Therefore $F'$ is $\Sigma$-sized. The last sentence of
  the theorem follows by \cref{lem:fam-size}.
\end{proof}

\begin{corollary}[\wisc{}]
  \label{cor:mu}
  Suppose \wisc{} holds and that $\C$ and $\D$ are cocomplete
  categories.  If $F:\C\times\D\fun\D$ is sized, then there is a
  function $X\mapsto \mu Y.F(X,Y)$ assigning to each $X\in\C$ an
  initial algebra $\mu Y.F(X,Y)$ for the functor $F(X,\_):\D\fun\D$.
  The induced functor $\mu Y.F(\_,Y):\C\fun \D$ is sized. \qedhere
\end{corollary}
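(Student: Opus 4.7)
The plan is to prove the three claims of the corollary separately: fibrewise existence of initial algebras, functoriality of the assignment $X\mapsto\mu Y.F(X,Y)$, and sizedness of the induced functor. Existence will follow from \cref{thm:sized-endof}, functoriality from the universal property of initial algebras, and sizedness from a well-founded induction that combines \cref{prop:sized-closure} with \cref{thm:colim}, which is where \wisc{} enters.

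For existence, fix a signature $\Sigma\in\Sig$ for which $F$ is $\Sigma$-sized and observe that each $F(X,\_):\D\fun\D$ is then $\Sigma$-sized as well: given a $\Sigma$-filtered size $\kappa$ and a diagram $D:\kappa\fun\D$, the paired diagram $i\mapsto(X,D_i):\kappa\fun\C\times\D$ has colimit $(X,\colim D)$, using \cref{prop:sized-closure}\eqref{item:4} for the constant first component together with componentwise computation of colimits in the product. The $\Sigma$-sizedness of $F$ then forces $F(X,\colim D)\iso\colim_i F(X,D_i)$, so by \cref{thm:sized-endof} each $F(X,\_)$ has an initial algebra $(\mu Y.F(X,Y),\iota_X)$, chosen uniformly in $X$. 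For functoriality, given $f:X\fun X'$ the composite $\iota_{X'}\comp F(f,\mathrm{id})$ is an $F(X,\_)$-algebra on $\mu Y.F(X',Y)$, and $\mu Y.F(f,Y)$ is defined as the unique $F(X,\_)$-algebra morphism it induces; uniqueness gives functoriality.

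For sizedness, let $D^X:\kappa\fun\D$ denote the inflationary iteration of $F(X,\_)$ over a $\Sigma$-filtered size $\kappa$ provided by \cref{lem:infl-iter}, so that $\mu Y.F(X,Y)=\colim_{k\in\kappa}D^X_k$. The strategy is a well-founded induction on $k\in\kappa$ producing, simultaneously, a $\Sigma$-sized functor $G_k:\C\fun\D$ extending $X\mapsto D^X_k$ together with natural-in-$X$ transition morphisms making $k\mapsto G_k$ a $\kappa$-shaped diagram in $[\C,\D]$. In the inductive step, $X\mapsto F(X,G_l(X))$ is the composite $F\comp\langle\mathrm{id}_\C,G_l\rangle$, which is $\Sigma$-sized by \cref{prop:sized-closure}\eqref{item:2}--\eqref{item:3}; then $G_k\defeq\colim_{l<k}F(\_,G_l)$ is $\Sigma$-sized by \cref{thm:colim}, and this is precisely where \wisc{} is invoked. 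A final application of \cref{thm:colim} with $\cat=\kappa$ to the diagram $k\mapsto G_k$ of $\Sigma$-sized functors shows that $\mu Y.F(\_,Y)=\colim_{k\in\kappa}G_k$ is itself sized.

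The hard part is administrative rather than conceptual: one has to organise the well-founded recursion so that, at every stage, the functoriality of $X\mapsto D^X_k$, the naturality in $X$ of the transition maps and the colimit cocones, and the $\Sigma$-sizedness of $G_k$ are all maintained in lockstep. One also has to package the thin semi-categories $\subsize(k)$ and $\kappa$ as genuine small categories in order to apply \cref{thm:colim}; but since $\D$ is cocomplete, formally adjoining identities does not disturb the colimits being formed, so this is harmless.
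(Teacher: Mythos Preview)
Your proposal is correct and follows essentially the same route as the paper: show each $F(X,\_)$ is $\Sigma$-sized (you do it directly via the paired diagram, the paper cites \cref{prop:sized-closure} and \cref{rem:sig-sum}), invoke \cref{thm:sized-endof} for existence, then run a well-founded induction on $i\in\kappa_\Sigma$ using \cref{thm:colim} with $\cat$ generated by $\subsize(i)$ to show each stage $\mu_i F_{(\_)}$ is $\Sigma$-sized, and finish with one more application of \cref{thm:colim} with $\cat$ generated by $\kappa_\Sigma$. Your extra paragraphs on functoriality and on packaging the semi-categories as genuine categories are additional detail the paper leaves implicit, not a different argument.
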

\begin{proof}
  Suppose $F:\C\times\D\fun\D$ is $\Sigma$-sized. It follows from
  \cref{prop:sized-closure} and \cref{rem:sig-sum} that for each
  $X\in\C$, the functor $F_X \defeq F(X,\_):\D\fun\D$ is
  $\Sigma$-sized. Therefore by \cref{thm:infl-iter,thm:sized-endof},
  the function
  $X\mapsto \mu Y.F(X,Y)\defeq \colim_{i\in \kappa_\Sigma}\mu_i F_X$
  is the required function mapping each $X\in\C$ to an initial algebra
  for $F(X,\_)$. Since each $\mu_i F_X$ is
  $\colim_{j<i}F(X, \mu_j F_X)$, it follows by well-founded induction
  on $i\in\kappa_\Sigma$ that each $\mu_i F_X$ is $\Sigma$-sized,
  using \cref{thm:colim} (taking $\cat$ to be the category generated
  by the thin semi-category $\subsize(i)$). Then by
  \cref{thm:colim} again (taking $\cat$ to be the
  category generated by $\kappa$) we have that
  $\mu Y.F(\_,Y) = \colim_{i\in \kappa_\Sigma}\mu_i F_{\_}$ is
  $\Sigma$-sized.
\end{proof}
Although \cref{prop:sized-closure,thm:colim,cor:mu} show that there is
quite a rich collection of sized functors, what is lacking so far is
any closure under taking limits, assuming the target category has
them; in other words the dual of \cref{thm:colim}. We consider this
for the case $\D=\Sets$, leaving consideration of more general
complete and cocomplete categories for future work. First note that if
$F,G:\C\fun\Sets$ are sized functors, the equalizer of any parallel
pair $F\rightrightarrows G$ of natural transformations is also a sized
functor (it is $(\Sigma\oplus\Sigma')$-sized if $F$ is $\Sigma$-sized
and $G$ is $\Sigma'$-sized). This is because each size $\kappa$ is
directed and so taking $\kappa$-colimits in $\Sets$ commutes with
finite limits and hence in particular with equalizers. So to get
closure of sized functors under all small limits it suffices to
consider small products. For this we need to use a ``double cover''
signature of a set (the wiscs $W$ and $W'$ in the proof of
\cref{thm:product} below), inspired by the use that
Swan~\cite{swan2018wtypes} makes of the indexed form of the WISC
Axiom; see also \cite{PittsAM:quoitq}.  So we will need wiscs for
indexed families of sets; but their existence follows from \wisc:
\begin{lemma}[\wisc{}]
  \label{lem:wisc}
  Assuming \wisc{} holds, then for every family of sets
  $(X_i)_{i\in I}\in\Sig$ there exists a family
  $(E_c)_{c\in C}\in\Sig$ that is a wisc for each set $X_i$.
\end{lemma}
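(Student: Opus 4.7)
The plan is to mimic the strategy used in the proof of \cref{lem:fam-size}, whose footnote already flags that it illustrates precisely the need for the large-cover form of \wisc{}. Rather than invoke \wisc{} separately for each $X_i$ (which would demand an unjustified choice of wisc indexed by $i\in I$), I will invoke \wisc{} once, applied to $I$ itself, using a single large cover of $I$ whose fibres over $i$ record the possible wiscs for $X_i$.

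Concretely, I will form the large set
\[
  S \defeq \sum_{i\in I}\{\sigma\in\Sig \mid \text{$\sigma$ is a wisc for $X_i$}\} \in\Sets_1.
\]
By \wisc{} applied to each $X_i$, every fibre of the first projection $\pi_1:S\fun I$ is inhabited, so $\pi_1$ is a large cover of the small set $I\in\Sets$. Applying \wisc{} to $I$ now yields a wisc $(E_c)_{c\in C}$ for $I$, and using its defining property against the cover $\pi_1$ provides $c\in C$ and $g:E_c\fun S$ such that $\pi_1\comp g:E_c\surj I$. For each $e\in E_c$ write $g(e)=(i_e,(F_{e,d})_{d\in C_e})$, so that by construction $(F_{e,d})_{d\in C_e}$ is a wisc for $X_{i_e}$. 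I will then aggregate these small-set-indexed wiscs into a single family by setting $C'\defeq\sum_{e\in E_c}C_e\in\Sets$ and $E'_{(e,d)}\defeq F_{e,d}$, which gives an element of $\Sig$.

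To check that $(E'_{(e,d)})_{(e,d)\in C'}$ is a wisc for an arbitrary $X_i$, I will take a cover $f:Y\surj X_i$, use surjectivity of $\pi_1\comp g$ to find some $e\in E_c$ with $i_e=i$, and then apply the wisc property of $(F_{e,d})_{d\in C_e}$ to $f$ to produce $d\in C_e$ and $h:F_{e,d}\fun Y$ with $f\comp h$ surjective; since $E'_{(e,d)}=F_{e,d}$, the pair $((e,d),h)$ witnesses the wisc property at index $(e,d)$.

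The only delicate point is verifying that $S$ really is a large cover of $I$: its fibre $\{\sigma\in\Sig\mid\sigma\text{ is a wisc for }X_i\}$ is a subclass of $\Sig\in\Sets_1$, and one needs $\Sets_1$ to be closed under $I$-indexed sums, which holds in the nested-universe setting assumed throughout the paper. Once that is in place, the remaining steps are essentially the same manipulations already performed in \cref{lem:fam-size}.
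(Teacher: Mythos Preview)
Your proposal is correct and follows essentially the same approach as the paper's proof: both form the large set $S=\sum_{i\in I}\{\sigma\in\Sig\mid\sigma\text{ is a wisc for }X_i\}$, observe that $\pi_1:S\fun I$ is a large cover, apply \wisc{} to $I$ to obtain a small set $J$ (your $E_c$) with a surjection onto $I$ factoring through $\pi_1$, and then take the signature sum of the resulting $J$-indexed family of wiscs. The verification that the aggregated family is a wisc for each $X_i$ is also the same in both.
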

\begin{proof}
  Consider $S \defeq \sum_{i\in I}\{W\in\Sig \mid \text{$W$ is a wisc
    for $X_i$}\} \in \Sets_1$. By \wisc{}, the first projection
  $\pi_1:S\fun I$ is a large cover of $I$. Since there is a wisc for
  $I$, it follows that there is some surjection $e:J\surj I$ in
  $\Sets$ and a function $W:J\fun \Sig$ so that for all $j\in J$,
  $W_j$ is a wisc for $X_{e(j)}$. Consider the signature sum
  $W\defeq\bigoplus_{j\in J}W_j\in\Sig$ as in \cref{def:sig-sum}. Thus
  writing $(C,E)$ for $W$ and $(C_j,E_j)$ for each $W_j$, we have
  $C\defeq \sum_{j\in J} C_j \in \Sets$ and $E\in \Sets^C$ is the
  function mapping each $(j,c)\in \sum_{j\in J} C_j$ to $E_j(c)$. Then
  we claim that $W\in\Sig$ is a wisc for each set $X_i$. For,
  given any cover $f:Y\surj X_i$, since $e:J\surj I$ is a surjection,
  there exists $j\in J$ with $e(j)= i$; then since $W_j=(C_j,E_j)$ is a
  wisc for $X_{e(j)} = X_i$, there exists $c\in C_j$ and
  $g:E_j(c)\fun Y$ such that $f\comp g$ is surjective. So there exists
  $(j,c)\in C$ and $g:E(j,c) = E_j(c) \fun Y$ such that $f\comp g$ is
  surjective. Therefore $W=(C,E)$ does indeed have the wisc property for
  $X_i$.
\end{proof}

\begin{theorem}[\wisc{} \textbf{(Products of set-valued sized functors are
  sized)}]
  \label{thm:product}
  Suppose that $\C$ is a cocomplete category.  Assuming \wisc{}
  holds, if $(F_x:\C\fun\Sets)_{x\in X}$ is a family of sized functors
  indexed by some set $X\in\Sets$, then the functor
  $\prod_{x\in X}F_x : \C\fun\Sets$ given by taking products in
  $\Sets$ is also sized.
\end{theorem}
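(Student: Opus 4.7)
The strategy is to combine a common signature for the family $(F_x)_{x\in X}$ with two wiscs so that arguments which would otherwise require choice can be reduced to taking upper bounds over small index sets. By \cref{lem:fam-size} fix a signature $\Sigma_0\in\Sig$ such that every $F_x$ is $\Sigma_0$-sized; by \cref{lem:wisc} pick a wisc $W=(C,E)$ for $X$ and a wisc $W'=(C',E')$ that is simultaneously a wisc for each $E_c\times E_c$, $c\in C$. Set $\Sigma \defeq \Sigma_0\oplus W\oplus W'$. I claim $\prod_{x\in X}F_x$ is $\Sigma$-sized. Given a $\Sigma$-filtered size $\kappa$ and diagram $D:\kappa\fun\C$, since each $F_x$ preserves the colimit of $D$, it suffices to show that the canonical comparison
\[
\phi:\colim_{i\in\kappa}\textstyle\prod_{x\in X} F_x(D_i) \fun \prod_{x\in X} \colim_{i\in\kappa} F_x(D_i)
\]
is a bijection in $\Sets$.

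For injectivity, suppose $(s_x)_x\in\prod_x F_x(D_i)$ and $(s'_x)_x\in\prod_x F_x(D_{i'})$ are equalised by $\phi$. Then for each $x$ the set $J_x \defeq \{j\in\kappa \mid i,i'<j \;\conj\; F_x(D_{i,j})(s_x)=F_x(D_{i',j})(s'_x)\}$ is inhabited, giving a cover $\sum_x J_x\surj X$. Applying the wisc property of $W$ yields $c\in C$ and $g:E_c\fun\sum_x J_x$ whose composite with the first projection is surjective; write $g(e)=(x_e,j_e)$. By $(C,E)$-filteredness of $\kappa$, choose an upper bound $j^*\in\kappa$ of $\{j_e\mid e\in E_c\}$. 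For every $x\in X$ surjectivity produces some $e$ with $x_e=x$, and functoriality then propagates the equality at level $j_e$ to level $j^*$, so $(s_x)_x$ and $(s'_x)_x$ coincide in $\prod_x F_x(D_{j^*})$ and therefore represent the same colimit class.

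Surjectivity is the main obstacle. Given $(t_x)_x$, the sets $L_x\defeq\{(i,s)\mid s\in F_x(D_i)\;\conj\;[s]_\kappa=t_x\}$ are inhabited (the canonical cone to the colimit is pointwise surjective in $\Sets$), so applying $W$ to the cover $\sum_x L_x\surj X$ furnishes $c\in C$ and $g:E_c\fun\sum_x L_x$ with $g(e)=(x_e,i_e,s_e)$ and surjective composite onto $X$. Take an upper bound $i^*\in\kappa$ of $\{i_e\}$ via $(C,E)$-filteredness and set $s^*_e\defeq F_{x_e}(D_{i_e,i^*})(s_e)\in F_{x_e}(D_{i^*})$. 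The key difficulty is that for distinct $e,e'\in E_c$ with $x_e=x_{e'}$ the values $s^*_e$ and $s^*_{e'}$ need not coincide in $F_{x_e}(D_{i^*})$; they agree only at some strictly later level. Here the second wisc is essential: for each $(e,e')\in E_c\times E_c$ the set $K_{e,e'}\defeq\{j\in\kappa\mid i^*<j \;\conj\; (x_e=x_{e'}\imp F_{x_e}(D_{i^*,j})(s^*_e)=F_{x_e}(D_{i^*,j})(s^*_{e'}))\}$ is inhabited, so applying $W'$ to the cover $\sum_{(e,e')}K_{e,e'}\surj E_c\times E_c$ and taking an upper bound via $(C',E')$-filteredness yields a single $j^*>i^*$ at which \emph{all} the required pairwise equalities hold. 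Then $x\mapsto F_x(D_{i^*,j^*})(s^*_e)$ (for any $e$ with $x_e=x$) is a well-defined function, producing $u\in\prod_x F_x(D_{j^*})$ with $\phi([u]_\kappa)=(t_x)_x$.
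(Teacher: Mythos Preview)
Your overall strategy matches the paper's: obtain a common $\Sigma_0$ via \cref{lem:fam-size}, adjoin two wiscs to form $\Sigma$, and show the canonical comparison $\phi$ is a bijection. The injectivity argument is correct and essentially the paper's.

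The surjectivity argument has a constructive gap. You claim that each set
\[
K_{e,e'}=\{\,j\in\kappa \mid i^*<j \;\conj\; (x_e=x_{e'}\imp F_{x_e}(D_{i^*,j})(s^*_e)=F_{x_e}(D_{i^*,j})(s^*_{e'}))\,\}
\]
is inhabited. But all you actually know is that \emph{assuming} $x_e=x_{e'}$, the two representatives become equal at some $j>i^*$; moving the existential quantifier outside that implication is an instance of the independence-of-premise schema $(\phi\imp\exists j.\,\psi(j))\;\imp\;\exists j.\,(\phi\imp\psi(j))$, which is not intuitionistically valid when $\phi$ (here the equality $x_e=x_{e'}$ in $X$) is undecidable. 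So the projection $\sum_{(e,e')}K_{e,e'}\to E_c\times E_c$ need not be a cover, and your application of $W'$ does not go through.

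The paper repairs exactly this point by choosing $W'$ to be a wisc not for the products $E_c\times E_c$ but for the family of \emph{kernel pairs} $\ker p=\{(e,e')\in E_c\times E_c\mid p(e)=p(e')\}$, indexed by all $c\in C$ and all surjections $p:E_c\surj X$ (a small family, so \cref{lem:wisc} applies). In the surjectivity step one then applies $W'$ to a cover of $\ker p$ for the particular $p$ produced by the first wisc; on that domain $x_e=x_{e'}$ holds by definition, the offending implication disappears, and the relevant set of indices is plainly inhabited. Note that your choice of $W'$ cannot simply be reused here: a wisc for $E_c\times E_c$ is not in general a wisc for an arbitrary subobject of it.
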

\begin{proof}
  By \cref{lem:fam-size}, there exists a signature $\Sigma$ so that
  each functor $F_x$ is $\Sigma$-sized. However, we need a bigger
  signature than $\Sigma$ in order to prove that $\prod_{x\in X}F_x$
  is sized. Using \wisc{}, let $W = (E_c)_{c\in C}$ be a wisc for
  $X$. Then using \cref{lem:wisc}, let $W' = (E'_{c'})_{c'\in C'}$ be
  a wisc for the sets in the family
  $(\ker p)_{c\in C, p: E_c\surj X}$, where
  \begin{equation}
    \label{eq:product-0}
    \ker p \defeq \{(d_1,d_2)\in E_c\times E_c \mid p(d_1) = p(d_2)\}
  \end{equation}
  We claim that the functor $F'\defeq\prod_{x\in X}F_x$ is
  $\Sigma'$-sized when $\Sigma' = \Sigma \oplus W \oplus W'$ (using
  the signature sum from \cref{def:sig-sum}).

  If $D:\kappa\fun \C$ is a diagram on a $\Sigma'$-filtered size
  $\kappa$, then by \cref{rem:sig-sum}, each $F_x$ is $\Sigma'$-sized
  and so we have a canonical isomorphism
  $\colim_{i\in\kappa} F_x(D_i) \iso F_x(\colim_{i\in\kappa}
  D_i)$. Taking the product over $x\in X$, we get
  $\prod_{x\in X} \colim_{i\in\kappa} F_x(D_i) \iso \prod_{x\in X}
  F_x(\colim_{i\in\kappa} D_i) = (\prod_{x\in X}
  F_x)(\colim_{i\in\kappa} D_i)$.  So it just remains to show that the
  canonical function
  \begin{equation}
    \label{eq:product-1}
    \textstyle
     \can_{F,D} : \colim_{i\in\kappa}\left(\left(\prod_{x\in X}
        F_x\right) D_i\right) = \colim_{i\in\kappa}\prod_{x\in X}
    F_x(D_i) \fun \prod_{x\in X}\colim_{i\in\kappa}F_x(D_i)
  \end{equation}
  is an isomorphism, that is, both an injection and a surjection. The
  summand $W$ in $\Sigma'$ ensures that $\kappa$ has upper bounds for
  $E_c$-indexed families for any $c\in C$; and the $W'$ summand
  ensures the same for $E'_{c'}$-indexed families, for any
  $c' \in C'$. The first kind of upper bound, together with the wisc
  property of $W$, comes into play in proving that $\can_{F,D}$ is
  injective; and both kinds of upper bounds and the wisc property of
  $W$ and $W'$ come into play in proving that $\can_{F,D}$ is
  surjective.

  To prove that $\can_{F,D}$ is injective and surjective we use the
  fact that the colimit in $\Sets$ of a directed diagram
  $D:\kappa\fun\Sets$ can be described explicitly as the quotient
  $(\sum_{i\in\kappa}D_i)/{\approx}$ where the equivalence relation
  $\approx$ identifies $(i,d),(i',d') \in \sum_{i\in\kappa}D_i$ if
  there is some $j\in\kappa$ with $i<j$, $i'<j$ and
  $D_{i,j}(d) = D_{i',j}(d')$. We will write $[i,d]_{\approx}$ for the
  ${\approx}$-equivalence class of $(i,d)\in
  \sum_{i\in\kappa}D_i$. Then the function in \cref{eq:product-1}
  satisfies for all $i\in \kappa$ and $f\in \prod_{x\in X} F_x(D_i)$
  \[
  \can_{F,D}[i, f]_{\approx} = \lambda x\in X.\; [i,f(x)]_{\approx}
  \]
  To see that $\can_{F,D}$ is injective, suppose we also have
  $i'\in \kappa$ and $f'\in \prod_{x\in X} F_x(D_i)$ satisfying
  $\forall x \in X.\; [i,f(x)]_{\approx} = [i',f'(x)]_{\approx}$; we
  wish to prove that $(i,f)\approx(i',f')$. By definition of $\approx$
  we have
  $\forall x\in X.\exists j\in\kappa.\; i < j \;\conj\; i' < j
  \;\conj\; F_x(D_{i,j})(f\,x) = F_x(D_{i',j})(f'x)$. Since
  $W = (E_c)_{c\in C}$ is a wisc for $X$, there exist $c\in C$, a
  surjection $p:E_c\surj X$ and a function $q:E_c\fun \kappa$ so that
  \begin{equation}
    \label{eq:product-2}
    \forall z\in E_c.\; i< q(z) \;\conj\; i'< q(z) \;\conj\;
    F_x(D_{i,q(z)})(f(p\,z)) = F_x(D_{i',q(z)})(f'(p\,z))
  \end{equation}
  Since $W$ is a summand in $\Sigma'$ and $\kappa$ is a
  $\Sigma'$-filtered size, there is an $<$-upper bound $j\in\kappa$
  for $q:E_c\fun \kappa$; and since $\kappa$ is directed, we can
  assume $i<j$ and $i'<j$. So from \eqref{eq:product-2} and
  surjectiviy of $p$ we deduce that
  $\forall x\in X.\; F_x(D_{i,j})(f\,x) = F_x(D_{i',j})(f'x)$, which implies
  $(i,f)\approx (i',f')$. Therefore the function
  $\can_{F,D}$ in \eqref{eq:product-1} is indeed injective.

  To see that $\can_{F,D}$ is also surjective, suppose we have
  $g\in\prod_{x\in X}\colim_{i\in\kappa}F_x(D_i)$. Since
  \[\textstyle
    \forall x\in X.\exists(i,d)\in \sum_{i\in\kappa}F_x(D_i).\; g(x) =
    [i,d]_{\approx}
  \]
  and $W$ is a wisc for $X$, there exists some $c\in C$,
  $p:E_c\surj X$ and
  $\langle q_1,q_2\rangle \in \prod_{z\in E_c}
  \sum_{i\in\kappa}F_{p(z)}(D_i)$ so that
  $\forall z\in E_c.\; g(p\,z) =[q_1(z),q_2(z)]_{\approx}$. Then since
  $W$ is a summand in $\Sigma'$ and $\kappa$ is a $\Sigma'$-filtered
  size, there is an $<$-upper bound $j\in\kappa$ for
  $q_1:E_c\fun \kappa$. So we have
  \begin{equation}
    \label{eq:product-3}
    \forall z\in E_c.\; g(p\,z) = [j, q'(z)]_{\approx}
 \end{equation}
 where $q'\in\prod_{z\in E_c}F_{p(z)}(D_j)$ is
 $q'(z)\defeq F_{p(z)}(D_{q_1(z),j})(q_2(z))$. It follows that the
 relation $\Phi\subseteq \sum_{x\in X} F_x(D_j)$ given by
 $\Phi(x,d) \defeq \exists z\in E_c.\; x = p(z) \;\conj\; d = q'(z)$
 is total (because $p$ is surjective); and were it also single-valued,
 it would determine a function $f\in\prod_{x\in X} F_x(D_j)$ which by
 virtue of \eqref{eq:product-3} would satisfy $\can_{F,D}[j,f] =
 g$. However, we need to increase $j$ to get this single-valued
 property. Recall that $W'$ is a wisc for the kernel
 \eqref{eq:product-0} of $p:E_c\surj X$.  If $(z_1,z_2)\in \ker p$,
 then by \eqref{eq:product-3}
 $[j,q'(z_1)]_{\approx} = g(p\,z_1) = g(p\,z_2) =
 [j,q'(z_2)]_{\approx}$. Therefore we have
 \[
   \forall (z_1,z_2)\in\ker p.\exists k\in \kappa.\; j < k \;\conj\;
   F_{p(z_1)}(D_{q_1(z_1),k})(q_2(z_1)) = F_{p(z_2)}(D_{q_1(z_2),k})(q_2(z_2))
 \]
 So since $W'=(E_{c'})_{c'\in C'}$ is a wisc for $\ker p$ and $\kappa$
 has $E_{c'}$-indexed upper bounds for any $c'\in C'$ and is directed,
 it follows that there exists $c'\in C'$, 
 $\langle p_1,p_2\rangle:E_{c'}\surj \ker p$ and $k\in\kappa$ with
 $j<k$ and
 \begin{equation}
   \label{eq:product-4}
   \forall z''\in E_{c'}. F_{p(p_1\,z'')}(D_{q_1(p_1 z''),k})(q_2(p_1\,z'')) =
   F_{p(p_2\,z'')}(D_{q_1(p_2 z''),k})(q_2(p_2\,z''))
 \end{equation}
 Now if we let $q''\in\prod_{z\in E_c}F_{p(z)}(D_k)$ be
 $q''(z)\defeq F_{p(z)}(D_{q_1(z),k})(q_2(z))$, then from
 \eqref{eq:product-3} we have
 \begin{equation}
   \label{eq:product-5}
    \forall z\in E_c.\; g(p\,z) = [k, q''(z)]_{\approx}
  \end{equation}
  Let the relation $\Phi'\subseteq \sum_{x\in X} F_x(D_k)$ be
  $\Phi'(x,d) \defeq \exists z\in E_c.\; x = p(z) \;\conj\; d =
  q''(z)$.  It is total because $p$ is surjective; but it is also
  single-valued because if $\Phi'(x,d) \conj \Phi'(x,d')$, then
  $d = q''(z_1) \conj d' = q''(z_2)$ for some $(z_1,z_2)\in \ker p$,
  so that there exists $z''\in E_{c'}$ with
  $p_1(z'')=z_1 \conj p_2(z'') = z_2$ and hence
  $d=q''(z_1) = q''(z_2) = d'$ by \eqref{eq:product-4}. Therefore
  $\Phi'$ is the graph of a function $f\in\prod_{x\in X} F_x(D_k)$;
  and by virtue of \eqref{eq:product-5} we have
  $\forall x\in X.\; g(x) = [k,f(x)]_{\approx}$, so that
  $g = \can_{F,D}[k,f]$. Thus $\can_{F,D} $ is indeed surjective.
\end{proof}

\begin{example}
  \label{exa:sym-cont}
  The \emph{symmetric containers} of Gylterud~\cite{GylterudHR:symc}
  generalize ordinary signatures by replacing the set of operation
  symbols by a groupoid $\mathbf{A}$ and the arity function by a
  functor $B:\mathbf{A}\fun\Sets$. The associated endofunctor
  $S_{\mathbf{A},B}:\Sets\fun\Sets$ maps each set $X\in\Sets$ to the
  colimit
  \begin{equation}
    \label{eq:symm-cont}
    S_{\mathbf{A},B}(X) \defeq \colim_{a\in\mathbf{A}} X^{B(a)}
  \end{equation}
  Applying \cref{thm:sized-endof}, \cref{prop:sized-closure} and
  \cref{thm:product} we have that any topos with universes satisfying
  \wisc{} has initial algebras for symmetric containers.

  In fact these initial algebras are special cases of
  QW-types~\cite{PittsAM:quoitq}: they can be seen as sets of terms
  quotiented by the symmetries given by the groupoid structure on the
  arguments of an operation symbol. So their existence in toposes with
  \wisc{} follows from the results of that paper. However, the
  construction here in terms of a colimit of an inflationary iteration
  gives a simpler description than for the general case of a QW-type.
\end{example}

\section{Related and future work}
\label{sec:relw}

The results in this paper make use of the constructive techniques
introduced by the authors and Fiore in our prior paper
\cite{PittsAM:quoitq}: the use of sizes given by ``plump''
well-founded orders on W-types and the use of a WISC axiom to see that
certain functors preserve colimits of that shape. That paper
constructs a large class of quotient-inductive types, called
\emph{QWI-types}, which by definition are initial among algebras for
indexed containers~\cite{abbott2005containers} satisfying a given
system of equations. Although the construction proceeds by forming a
size-indexed family of objects in the case $\C$ is $\Sets^I$ (with
$I\in\Sets$) and taking its colimit, it does not appear to be a direct
corollary of \cref{thm:infl-iter}. Conversely, the results here do not
follow from the ones in \cite{PittsAM:quoitq}, since for one thing
here we consider general cocomplete categories $\C$, rather than just
products of $\Sets$. In this respect we are closer to the approach of
Fiore and Hur~\cite{FioreMP:confae} and it would be interesting to see
whether our techniques can be extended to give constructive proofs of
existence of free algebras for the very general notion of equational
system on a category that is introduced in that paper. This may
involve investigating the extent to which our approach allows a
constructive treatment of some of the classical theory of locally
presentable and accessible categories~\cite{AdamekJ:locpac}, which is
future work.

The inflationary iteration indexed by a notion of size that we have
introduced in the paper generalises from complete posets to
cocomplete categories aspects of Abel and Pientka's
work~\cite{AbelA:typbti,AbelA:wellfrc}. These papers develop a theory
of \emph{sized types} and its semantics. Abel has added a version of
this to the type theory provided by the Agda proof
assistant~\cite{Agda261}. Unfortunately recent versions of Agda
contain features that allow one to use sized types to prove a logical
contradiction. The problem is that, in contrast to the notion of size
used here, the one by Abel et al.~\cite{AbelA:typbti,AbelA:wellfrc}
features a generic size $\infty$ at which sized-indexed sequences
become stationary. Currently in Agda (version~2.6.2) one both has
$\infty < \infty$ and can prove that $<$ is well-founded, leading to a
contradiction. For us, the intuitive and important aspect of ``size''
is that there is well-founded ordering, thus permitting definitions by
well-founded recursion on a set of sizes.  Then having a single size
$\infty$ at which all sequences become stationary is semantically
problematic. So we avoid having an explicit stationary size $\infty$,
at the expense of having to take a colimit to obtain an initial
algebra, instead of just instantiating an inflationary iteration at
$\infty$.

We hope Agda's sized types will get fixed, since they are useful in
practice; they are most often used (together with copatterns) to
demonstrate that recursively defined functions on a coinductively
defined record type are well-defined (that is, are
``productive'')~\cite{AbelA:wellfrc}. Here, while avoiding sized
types, we can still dualise \cref{thm:infl-iter}. Applying it
to the opposite category $\C^{\op}$, we have that if $\C$ is complete
and $F:\C\fun\C$ preserves limits of diagrams $\kappa\fun\C$ for some
size $\kappa$, then \emph{$F$ has a final coalgebra $\nu F$ given by
  the limit of a deflationary iteration
  $(\nu_iF = \lim_{j<i}F(\nu_j F))_{i\in\kappa}$}. We have yet to
investigate whether this is useful, that is, how rich the class of
such endofunctors is in a constructive setting.

Ad\'amek, Milius and Moss~\cite{AdamekJ:iniatw} take a different
approach to constructive initial algebra theorems than the one here,
avoiding iteration of the endofunctor. They consider categories $\C$
with colimits of diagrams of monomorphisms (from some well-behaved
class) and endofunctors $F:\C\fun\C$ that preserve those
monomorphisms. Using the intuitionistically valid fixed point theorem
of Pataraia (see~\cite[Theorem~3.2]{BauerA:bouwpt}), they prove that
such an $F$ has an initial algebra iff it has a prefixed point (an
algebra whose structure morphism is a monomorphism). Preserving
monomorphisms seems less of a condition on a functor than the one we
need for \cref{thm:infl-iter}, that is, preserving colimits of some
size $\kappa$ (although the two conditions are independent). However,
as we saw in \cref{thm:colim}, our class of sized endofunctors is
closed under taking coequalizers, so that we get initial algebras for
constructs involving quotients, such as \cref{exa:sym-cont}, whereas
endofunctors preserving monomorphisms are not in general closed under
taking coequalizers. Another difference to~\cite{AdamekJ:iniatw} is
that it uses impredicative principles (the proof of Pataraia's fixed
point theorem uses impredicative quantification), whereas our Agda
development~\cite{agdacode} shows that our initial algebra theorem
(\cref{thm:infl-iter}) is valid in a predicative constructive logic.


\nocite{*}

\end{document}